\newtheorem{lemma}{Lemma}[section]
\newtheorem{teo}[lemma]{Theorem}
\newtheorem{prop}[lemma]{Proposition}
\newtheorem{cor}[lemma]{Corollary}
\theoremstyle{definition}
\newtheorem{quest}[lemma]{Question}
\theoremstyle{remark}
\newtheorem{rem}[lemma]{Remark}
\newcommand{\Iso}{{\rm Isom}}
\newcommand{\matR} {\ensuremath {\mathbb{R}}}
\newcommand{\matZ} {\ensuremath {\mathbb{Z}}}
\newcommand{\matH} {\ensuremath {\mathbb{H}}}
\newcommand{\Vol} {\ensuremath {{\rm Vol}}}
\author{Bruno Martelli}
\address{Dipartimento di Matematica, Largo Pontecorvo 5, 56127 Pisa, Italy}
\email{martelli at dm dot unipi dot it}
\author{Stefano Riolo}
\address{Institut de math\'ematiques, Rue Emile-Argand 11, 2000 Neuch\^atel, Switzerland}
\email{stefano dot riolo at unine dot ch}
\author{Leone Slavich}
\address{Dipartimento di Matematica, Via Ferrata 5, 27100 Pavia, Italy}
\email{leone dot slavich at gmail dot com}
\thanks{S. R. was supported by the SNSF project no. PP00P2-170560, and thanks the Mathematics Department of the University of Pisa for the hospitality.}
\title[Convex plumbings in closed hyperbolic 4-manifolds]{Convex plumbings \\ in closed hyperbolic 4-manifolds}
\begin{document}

\begin{abstract}
We show that every plumbing of disc bundles over surfaces whose genera satisfy a simple inequality may be embedded as a convex submanifold in some closed hyperbolic four-manifold. In particular its interior has a geometrically finite hyperbolic structure that covers a closed hyperbolic four-manifold.
\end{abstract}

\maketitle

\section{Introduction}

We study the following general question. All the manifolds in this paper are assumed implicitly to be smooth, connected, and oriented, unless otherwise stated.

\begin{quest} \label{main:quest}
Let $M$ be a compact smooth $n$-manifold with non-empty boundary. Is there a closed hyperbolic $n$-manifold $W$ containing $M$ as a convex submanifold?
\end{quest}

Here \emph{convex} means that every arc in $M$ is homotopic (relative to its endpoints) in $M$ to a geodesic. This is in fact a local property of the boundary of $M$, since $M$ is convex if and only if it is locally convex  
\cite[Chapter I.1.3]{CEG}.

In dimension $n=2$ the answer to this question is positive for any given surface $M$ with boundary. In dimension $n=3$, it is positive precisely when $M$ is irreducible and algebraically atoroidal (that is, $\pi_1(M)$ does not contain $\matZ \times \matZ$). This is a manifestation of geometrisation, see Remark \ref{many:rem}.

Why are we interested in convex submanifolds $M\subset W$ in closed hyperbolic manifolds? One motivation is that being convex gives $M$ some privileges. The embedding is necessarily $\pi_1$-injective, and the cover $\tilde W \to W$ associated to the subgroup $\pi_1 M < \pi_1 W$ is a geometrically finite complete hyperbolic manifold diffeomorphic to the interior of $M$. So in particular the interior of $M$ has a geometrically finite hyperbolic structure that covers the closed hyperbolic $W$. See Section \ref{convex:subsection}.


We would like to investigate Question \ref{main:quest} in the higher dimensions $n \geq 4$, where our knowledge of the topology of hyperbolic manifolds is embarrassingly poor. The main contribution of this paper is to furnish a family of examples in dimension 4. 

\subsection*{Plumbings}
Recall that a \emph{plumbing graph} is a graph where every node is assigned a pair $(e_i,g_i)$ of integers with $g_i\geq 0$, and every edge is given a sign $\varepsilon_j = \pm 1$. Loops and multiple edges connecting two nodes are allowed. Given a plumbing graph, one may construct an oriented compact four-manifold $M$, called \emph{plumbing}, by taking for each vertex the disc bundle with Euler number $e_i$ over the surface with genus $g_i$, and performing for each edge a plumbing along the corresponding bundles that creates a transverse intersection of the base surfaces with sign $\varepsilon_j$. See \cite[Section 4.6.2]{GS} for more details. 

The regular neighbourhood of a generically
immersed closed (possibly disconnected) surface in a four-manifold is a disjoint union of plumbings. Plumbings are ubiquitous in dimension four and it is natural to ask whether they can be embedded as convex subsets in some closed hyperbolic four-manifold. We prove here the following.

\begin{teo} \label{main:teo}
Let $M$ be a plumbing whose graph satisfies
$$g_i \geq 2\big(|e_i| + v_i +1)$$
at every vertex, where $v_i$ is its valence. The manifold $M$ is contained as a convex subset in a closed hyperbolic four-manifold $W$ with
$$\Vol(M) < C_1 \cdot \sum_i g_i \quad \mbox{and} \quad \Vol(W) < C_2 \cdot \Vol(M),$$ 
for some $C_1,C_2 > 0$ independent of the plumbing graph. The same result holds for any manifold $M$ that is a $\partial$-connected sum of plumbings satisfying the above requirement.
\end{teo}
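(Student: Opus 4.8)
The plan is to realise $M$ as a convex regular neighbourhood of a generically immersed surface inside a closed hyperbolic four-manifold $W$, and to build $W$ explicitly enough that both volume bounds can be read off from the construction. The guiding principle is the observation quoted above, that a regular neighbourhood of a generically immersed closed surface is a disjoint union of plumbings: one wants a surface $S\subset W$ with components of genera $g_i$, with transverse double points realising the edges (with signs $\varepsilon_j$), and with normal Euler numbers $e_i$. One warning governs the whole construction: the normal bundle of a totally geodesic surface in a hyperbolic four-manifold is flat, hence has vanishing Euler number. So whenever some $e_i\neq 0$ the core surfaces cannot be taken totally geodesic, and the real work is to produce \emph{convex but non-geodesic} surfaces carrying the prescribed twisting while keeping their tubular neighbourhoods locally convex.

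First I would build one block per node. For the node $i$ I would assemble a hyperbolic four-manifold $P_i$ with locally convex boundary, diffeomorphic to the $e_i$-disc bundle over $\Sigma_{g_i}$ with $v_i$ bidiscs removed, one for each incident edge. The model is a bent core surface in $\matH^4$: cut $\Sigma_{g_i}$ into geodesic polygons, realise each as a totally geodesic piece, and glue along edges by isometries that fold consecutive pieces out of a common $\matH^2$ and rotate the normal $2$-plane. A purely in-plane gluing would give a flat normal bundle and hence $e_i=0$; instead the successive folds make the normal framing acquire a nonzero total turning, and a Gauss--Bonnet-type count sums the discrete turnings to the required integer $e_i$. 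Each fold is kept small enough that the boundary of the tube of fixed radius stays locally convex. This is exactly where the hypothesis $g_i \geq 2(|e_i|+v_i+1)$ enters: it is the combinatorial budget guaranteeing enough polygons (enough area, equivalently enough edges to bend across) to produce the turning $e_i$, to house the $v_i$ transverse intersections, and still to leave room for the identifications, all compatibly with convexity.

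Next I would plumb. Two blocks $P_i,P_j$ sharing an edge of sign $\varepsilon_j$ are glued by identifying a boundary bidisc of one with a boundary bidisc of the other so that the two sheets cross transversely with the prescribed sign; arranging them to meet orthogonally along a common normal geodesic keeps the union locally convex at the crossing. Performing this over all edges, and taking $\partial$-connected sums by gluing along boundary balls, yields a hyperbolic four-manifold $M$ with locally convex boundary and the correct diffeomorphism type. By \cite[Chapter I.1.3]{CEG} local convexity upgrades to global convexity, whence the cover associated to $\pi_1 M$ is geometrically finite, giving the stated consequence. The bound $\Vol(M) < C_1\sum_i g_i$ then follows because $M$ is a tube of bounded radius over a surface whose area is, by Gauss--Bonnet, proportional to $\sum_i (2g_i-2)$.

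It remains to place $M$ convexly inside a \emph{closed} $W$ with $\Vol(W) < C_2\,\Vol(M)$. Invoking subgroup separability would produce such a $W$ but with no control on its volume, so instead I would complete the convex boundary $\partial M$ efficiently: cap it off by a collar of hyperbolic pieces of volume comparable to the area of $\partial M$ (again $O(\sum_i g_i)$), reducing to a manifold with geodesic boundary or a double that embeds in a closed hyperbolic four-manifold of comparable volume, all the while retaining $M$ as a convex submanifold. The main obstacle is the block construction of the second paragraph: realising an arbitrary Euler number on a necessarily non-geodesic convex surface, simultaneously fitting in the required transverse intersections, \emph{and} keeping the boundary tube locally convex. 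Making these three demands compatible is precisely what forces the linear genus bound, and I expect the bulk of the technical effort to lie in the explicit folding estimates that keep the second fundamental form of the boundary of the correct sign.
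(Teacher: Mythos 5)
Your proposal has the right guiding picture (a pleated core surface whose bending produces the Euler numbers, with $M$ realised as a locally convex thickening of it), and this is indeed the spirit of the paper. But two of your steps fail as stated, and they are exactly the points where the paper's actual mechanism is indispensable.

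First, the claim that ``each fold is kept small enough that the boundary of the tube of fixed radius stays locally convex'' is false, no matter how small the folds are. Near a bending locus the fixed-radius tube is the union of the two convex tubes around the adjacent flat pieces, and on the concave side of the fold their boundary hypersurfaces cross along a crease where the interior dihedral angle of the union exceeds $\pi$ by (roughly) the bending angle; a union of convex sets is not convex, in hyperbolic geometry just as in Euclidean geometry. (The objects that \emph{are} convex in the Gromov--Lawson--Thurston and Kuiper constructions are neighbourhoods of the convex hull of the bent surface, not tubes around the surface itself.) The paper avoids metric tubes entirely: the core surface is pleated inside the $2$-skeleton of a tessellation by right-angled $120$-cells, the ``tube'' is a polyhedral thickening obtained by attaching $120$-cells above and below each pentagon, and local convexity becomes the combinatorial statement that no concave $270^\circ$ dihedral angles are created. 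That statement is a special feature of the hyperbolic $120$-cell (it fails for the analogous Euclidean gluings), proved by an adjacency analysis of dodecahedral facets \cite[Lemma 2.5]{MRS}, and it is rechecked in the plumbing step here. The Euler number likewise is not a smooth turning count but comes from the Gromov--Lawson--Thurston vertex contributions of $\pm\tfrac12$ at the pleating vertices of certain theta-graphs \cite{GLT}.

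Second, and more fundamentally, your passage to a \emph{closed} $W$ is not a construction. One cannot double or mirror along a boundary that is merely convex rather than totally geodesic, and ``capping off $\partial M$ by a collar of hyperbolic pieces'' to reach geodesic boundary has no known implementation --- producing \emph{any} closed hyperbolic $W \supset M$, volume bounds aside, is precisely Question \ref{main:quest}, i.e.\ the content of the theorem. This is why the paper insists on building $M$ as a hyperbolic manifold with \emph{right-angled corners} whose facets are all embedded: then Proposition \ref{mirror:prop} applies, one colours the facets with a bounded number $f+1$ of colours and mirrors iteratively along the colour classes, obtaining a closed $W$ tessellated by $2^{f+1}$ copies of $M$. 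This mirroring step is also the sole source of the constant $C_2 = 2^{f+1}$, whereas in your outline the bound $\Vol(W) < C_2 \cdot \Vol(M)$ has nothing supporting it. Your estimate $\Vol(M) < C_1 \sum_i g_i$ is fine in spirit (the paper gets it by counting $120$-cells, which is linear in $\sum_i g_i$ once $|e_j| \leq g_j/2$), but without the corner structure both the convexity at the folds and the existence of $W$ collapse.
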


This result is already new (to the best of our knowledge) when the plumbing graph is a point. In this case the manifold $M$ is a disc bundle, and the theorem says that if $g\geq 2(|e| + 1)$ then $M$ embeds convexly in some closed hyperbolic four-manifold $W$. 


If the graph consists of a vertex and an edge, then $M$ is a self-plumbed disc bundle, and the sufficient condition to embed $M$ convexly in some closed hyperbolic four-manifold is $g\geq 2(|e|+3)$. More generally, if $M$ is constructed by self-plumbing $k$ times a disc bundle (with any signs $\pm 1$), the condition is $g\geq 2(|e|+2k+1)$. 

We note that the inequality $g_i \geq 2$ is a necessary condition in general, because on a convex compact hyperbolic manifold $M$ we have $\pi_2(M) = \{0\}$ and $\matZ \times \matZ \not <\pi_1(M)$, so neither spheres nor tori are allowed in the plumbing.

Theorem \ref{main:teo} implies the following.
\begin{cor} \label{main:cor}
For every symmetric integer matrix $Q$ there is a boundary connected sum of plumbings $M$ with intersection form $Q$ that embeds as a convex submanifold into a closed hyperbolic 4-manifold $W$.
\end{cor}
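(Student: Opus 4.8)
The plan is to derive Corollary \ref{main:cor} directly from Theorem \ref{main:teo} by realising any prescribed symmetric integer matrix $Q$ as the intersection form of a boundary connected sum of plumbings, each of whose vertices satisfies the genus inequality $g_i \geq 2(|e_i| + v_i + 1)$. The intersection form of a plumbing is read off combinatorially from the plumbing graph: the diagonal entry at a vertex is the Euler number $e_i$, and the off-diagonal entry between two vertices is the signed count $\sum_j \varepsilon_j$ of edges joining them. Thus, given $Q = (q_{kl})$ of size $n \times n$, the natural first step is to construct a plumbing graph on $n$ vertices whose weighted incidence data reproduces $Q$ exactly: set $e_k = q_{kk}$, and for each pair $k \neq l$ place $|q_{kl}|$ edges between vertices $k$ and $l$, all carrying the sign of $q_{kl}$.

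With the Euler numbers and edges fixed, the genera remain free parameters, so the second step is simply to choose each $g_k$ large enough to meet the hypothesis of Theorem \ref{main:teo}. Here the valence $v_k$ of vertex $k$ equals $\sum_{l \neq k} |q_{kl}|$ (counting loops, if $q$ has diagonal contributions one wishes to encode differently, twice), which is a finite number determined by $Q$; so one picks any even integer $g_k \geq 2(|q_{kk}| + v_k + 1)$. This guarantees every vertex satisfies the inequality, and by Theorem \ref{main:teo} the resulting plumbing $M$ — or, if one prefers to keep the pieces simple, a boundary connected sum of single-vertex and single-edge plumbings assembled to the same total form — embeds convexly in a closed hyperbolic four-manifold $W$.

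The only genuine point requiring care is the matching between the combinatorial intersection form of the plumbing and the algebraic object $Q$, together with checking that the boundary connected sum operation behaves additively on intersection forms. Recall that $H_2(M;\matZ)$ of a plumbing is generated by the zero-sections of the disc bundles, and a standard computation (see \cite[Section 4.6.2]{GS}) shows that with the orientation conventions above the intersection matrix in this basis is precisely the one we have prescribed; the self-intersection of a zero-section is its Euler number, and two distinct zero-sections meet transversally once per edge with the recorded sign. Taking a boundary connected sum of plumbings has no effect on $H_2$ or on the intersection pairing, since one glues along balls in the boundary and introduces no new second homology nor any new intersections, so the form of a $\partial$-connected sum is the orthogonal direct sum of the pieces. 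Hence the total intersection form is $Q$ as required.

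The main obstacle — really the only subtlety — is ensuring that the enlargement of the genera needed to satisfy the hypothesis of Theorem \ref{main:teo} does \emph{not} alter the intersection form. This is where one must invoke the fact that the second homology of a disc bundle over a surface, and the self-intersection number of its zero-section, depend only on the Euler number $e_i$ and not on the genus $g_i$ of the base. Increasing $g_i$ changes $\pi_1$ and the rank of $H_1$ of the base surface, but leaves the intersection pairing on the zero-sections untouched, so we are free to inflate the genera as much as the inequality demands without disturbing $Q$. Once this independence is recorded, the corollary follows immediately.
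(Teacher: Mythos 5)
Your proposal is correct in its main line and follows essentially the same route as the paper: build the graph with $e_k = q_{kk}$ and $|q_{kl}|$ edges between distinct vertices $k\neq l$ carrying the sign of $q_{kl}$, choose the genera large enough to satisfy the inequality, and invoke Theorem \ref{main:teo}; the extra verifications you record (that the graph data computes the intersection form, and that $\partial$-connected sums add forms orthogonally) are exactly what the paper leaves implicit.

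Two caveats. First, your parenthetical alternative --- realising $Q$ by a boundary connected sum of single-vertex and single-edge plumbings ``assembled to the same total form'' --- is not available in general: by the very additivity you prove, a $\partial$-connected sum has intersection form the orthogonal direct sum of its pieces, so this would force $Q$ to decompose into blocks of rank at most $2$, which fails for indecomposable forms such as $E_8$. Since your main construction does not use this remark, the proof is not damaged, but the remark should be deleted. Second, when $Q$ is reducible the graph you build is disconnected, so ``the resulting plumbing $M$'' is a disconnected manifold, and neither Theorem \ref{main:teo} nor the notion of convex submanifold (all manifolds in the paper are assumed connected) applies to it directly; the paper handles this by first joining the components with $\partial$-connected sums, an operation explicitly allowed by the last sentence of Theorem \ref{main:teo}, and which leaves the form equal to $Q$ by the additivity you established. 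With that one sentence added, your argument is complete.
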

\begin{proof}
If $Q$ is a $k\times k$ matrix, pick a plumbing graph with $k$ vertices such that $e_j = Q_{jj}$ and there are $|Q_{ij}|$ edges connecting the vertices $i \neq j$, all decorated with ${\rm sign}(Q_{ij})$. Pick any $g_i$ that satisfies the inequality of Theorem \ref{main:teo}. We get a plumbing $M$ with intersection form $Q$ that embeds convexly in a closed hyperbolic 4-manifold. If $M$ is disconnected (that is, if $Q$ is reducible), we priorly connect it with some $\partial$-connected sums: this operation is allowed by Theorem \ref{main:teo}.
\end{proof}

\begin{cor} \label{main:cor2}
For every symmetric integer matrix $Q$ there is a boundary connected sum of plumbings $M$ with intersection form $Q$, whose interior admits a geometrically finite complete hyperbolic structure that covers a closed hyperbolic manifold $W$. 
\end{cor}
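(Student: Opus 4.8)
The plan is to obtain this statement as an immediate consequence of Corollary \ref{main:cor} combined with the general properties of convex submanifolds that were recalled in the introduction and are established in Section \ref{convex:subsection}. No new geometric construction should be needed: the substantive work has already been carried out in proving Theorem \ref{main:teo} and Corollary \ref{main:cor}, and here one only has to read off a consequence.

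First I would apply Corollary \ref{main:cor} to the given symmetric integer matrix $Q$. This directly produces a boundary connected sum of plumbings $M$ with intersection form $Q$, together with a convex embedding $M \subset W$ into a closed hyperbolic four-manifold $W$. Since $Q$ is arbitrary, this single invocation handles all cases at once, so the same manifold $M$ will serve for the present corollary.

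The second step is to convert convexity into the desired hyperbolic structure on the interior. As recalled in the introduction, a convex embedding $M \subset W$ is automatically $\pi_1$-injective, so $\pi_1 M$ sits as a subgroup of $\pi_1 W$. Passing to the associated cover $\tilde W \to W$, the space $\tilde W$ is a geometrically finite complete hyperbolic manifold diffeomorphic to $\interior{M}$. Transporting this hyperbolic structure along the diffeomorphism equips $\interior{M}$ with a geometrically finite complete hyperbolic structure, and the covering map $\tilde W \to W$ exhibits it as a cover of the closed hyperbolic manifold $W$, which is precisely the assertion.

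I do not anticipate any genuine obstacle: the only point deserving a word is that the structure placed on $\interior{M}$ must be the one pulled back from $W$, so that the covering map is isometric and $W$ is literally covered rather than merely abstractly related. This is immediate from the covering-space description of $\tilde W$ in Section \ref{convex:subsection}, and all the real content is already packaged in Corollary \ref{main:cor} and in the convexity discussion there.
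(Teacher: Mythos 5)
Your proposal is correct and follows exactly the route the paper intends: the paper gives no separate proof of this corollary precisely because it is the combination of Corollary \ref{main:cor} with Propositions 2.3 and 2.4 of Section \ref{convex:subsection} (convexity gives $\pi_1$-injectivity, and the cover of $W$ associated to $\pi_1 M$ is the geometrically finite extension $\hat M$, diffeomorphic to the interior of $M$). Your remark that the hyperbolic structure on the interior must be the one pulled back along this diffeomorphism, so that $\tilde W \to W$ is a genuine isometric covering, is exactly the right point to flag and is handled correctly.
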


All these manifolds $M$ and $W$ are constructed by assembling right-angled 120-cells. In particular all the closed manifolds $W$ constructed here cover the same Coxeter simplex orbifold $\{5,3,3,4\}$. 

\subsection*{Related work}
The first complete hyperbolic structures on the interior of some disc bundles over surfaces with genus $g>0$ and Euler number $e\neq 0$ were exhibited by Gromov -- Lawson -- Thurston \cite{GLT} and Kapovich \cite{K}. Kuiper \cite{Ku} built specimens for all $e,g$ with $|e| \leq 2/3(g-1)$, and then Luo \cite{L} for all $|e| \leq g-1$. More recently Anan'in and Chiovetto \cite{AC} constructed some examples with $|e| = 6/5(g-1)$. Gromov -- Lawson -- Thurston \cite{GLT} conjectured that $|e| \leq 2(g-1)$ in all cases.

Our contribution is to show that when $|e| \leq 1/2(g-2)$ the disc bundle embeds convexly in a closed hyperbolic four-manifold, and this is a stronger property than having a complete hyperbolic structure in its interior. 


In Theorem \ref{main:teo}, if $e_i$ is odd for some $i$ then both $M$ and $W$ have odd intersection form and are hence both non-spin. We find here many examples of non-spin closed hyperbolic 4-manifolds $W$. We constructed the first such manifolds recently in \cite{MRS}, and the techniques employed here are an extension of these.

We note that any compact hyperbolic manifold $M$ with geodesic boundary embeds convexly in a closed hyperbolic manifold $W$, constructed simply by mirroring $M$ along $\partial M$. Such manifolds $M$ exist in all dimensions, also with connected boundary \cite{LR}. Some explicit examples were constructed in \cite{KMT} using the right-angled 120-cell. The holonomy representations of these manifolds are locally rigid in dimension $n\geq 4$, see \cite{KS}. A plumbing cannot have a hyperbolic metric with geodesic boundary because its boundary is a graph manifold and hence does not admit any hyperbolic metric.

A comprehensive survey on higher dimensional Kleinian groups is \cite{K2}, a shorter one on finite-volume hyperbolic 4-manifold is \cite{Ma}.

\subsection*{Outline of the construction} 
We define for every triple of integers $e,i,g$ with $i\geq 0$ and
$$g \geq 2\big(|e|+i+1\big)$$
an oriented hyperbolic 4-manifold with right-angled corners $M^{e,i,g}$. The manifold $M^{e,i,g}$ is diffeomorphic (after smoothing its corners) to the disc bundle over the genus-$g$ surface with Euler number $e$. It is tessellated into right-angled 120-cells and contains $i$ \emph{islands}: these are some safety zones that will be used to perform the plumbings. The construction of $M^{e,i,g}$ follows the main theme of \cite{MRS} with some modifications. 

We then plumb the 4-manifolds with corners $M^{e,i,g}$ as prescribed by the given plumbing graph. The resulting plumbing $M$ has again the structure of a hyperbolic four-manifold with right-angled corners. By colouring and mirroring its facets we embed it as a convex submanifold into a closed hyperbolic four-manifold $W$.

The hyperbolic manifolds with right-angled corners $M^{e,i,g}$, and then $M$ and $W$, are all tessellated into a certain number of right-angled 120-cells. The base surfaces of the disc bundles form altogether an immersed surface $S \subset M$, pleated along some edges, contained in the 2-skeleton of $M$. The immersed surface $S$ is tessellated into right-angled pentagons.

\subsection*{Further research} We think that the techniques introduced here may be extended to construct many more compact hyperbolic 4-manifolds with right-angled corners, and hence many more compact 4-manifolds that embed convexly in some closed hyperbolic 4-manifold. 

Another natural research theme would consist of studying the deformations of the convex hyperbolic structures constructed in this paper, and their degenerations, as it has been done fruitfully in dimension 3 in the last decades.

\subsection*{Acknowledgements}
We thank Steven Tschantz for producing and sharing the pictures in Figure \ref{8dodecaedri_colorato:fig} and \ref{8dodecaedri_new:fig}. 

\section{Preliminaries}

\subsection{Convex submanifolds} \label{convex:subsection}
We recall some well-known facts on convex hyperbolic manifolds. We refer to \cite[Chapter I.1.3]{CEG} for a detailed introduction.

A smooth manifold $M$ with boundary is \emph{hyperbolic} if it has constant sectional curvature $-1$. Equivalently, it is locally isometric to some $n$-submanifold with boundary of $\matH^n$.

A connected hyperbolic manifold $M$ with boundary is \emph{convex} if every arc in $M$ is homotopic (relative to its endpoints) to a geodesic. This is a local property: the manifold $M$ is convex if and only if it is locally convex, that is every point has a convex neighbourhood \cite[Corollary I.1.3.7]{CEG}. 

A connected hyperbolic manifold $M$ with boundary is complete and convex if and only if the developing map $D\colon \tilde M \to \matH^n$ is a diffeomorphism onto a convex complete submanifold $C \subset \matH^n$, see \cite[Proposition I.1.4.2]{CEG}. In this case we may see $M$ directly as $M=C/_\Gamma$ for some discrete $\Gamma < \Iso(\matH^n)$. 

We then use the convexity of $C$ to show that $\Gamma$ acts freely. 
Suppose by contradiction that $\Gamma$ fixes some $x \in \mathbb{H}^n$. Let $\pi(x)$ denote the closest-point projection of $x$ to $C$. Since the map $\pi$ is $\Gamma$-equivariant, $\pi(x)$ will be a fixed point for the action of $\Gamma$ on $C$, which is absurd. 

Therefore $M$ naturally embeds isometrically in a unique complete hyperbolic manifold $\hat M = \matH^n/_\Gamma$ of the same dimension without boundary, such that the inclusion $M\hookrightarrow \hat M$ induces an isomorphism of fundamental groups \cite[Theorem I.2.4.1]{CEG}. We call $\hat M$ the \emph{extension} of $M$.

\begin{prop}
Let $M$ be a compact convex hyperbolic manifold with boundary.
Its extension $\hat M$ is geometrically finite and diffeomorphic to the interior of $M$. 
\end{prop}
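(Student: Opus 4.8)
The plan is to deduce both conclusions from the single geometric fact that the convex core of $\hat M$ is compact and is contained in $M$. Recall from the discussion above that $M = C/_\Gamma$ for a $\Gamma$-invariant complete convex submanifold $C \subset \matH^n$ and a torsion-free discrete group $\Gamma < \Iso(\matH^n)$; being complete and convex, $C$ is closed in $\matH^n$. First I would check that the limit set $\Lambda \subset \partial \matH^n$ of $\Gamma$ lies in the closure of $C$: fixing a basepoint $p \in C$, the orbit $\Gamma p$ is contained in $C$ and its accumulation points on $\partial \matH^n$ are by definition $\Lambda$. Since $C$ is closed and convex, it contains every geodesic joining two points of $\Lambda$, and iterating this remark shows that the convex hull $\mathrm{Hull}(\Lambda)$ is contained in $C$. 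Passing to the quotient, the convex core $\mathrm{Hull}(\Lambda)/_\Gamma$ is a closed subset of $\hat M$ contained in the compact set $M$, hence compact. A complete hyperbolic manifold with compact convex core is convex cocompact, and in particular geometrically finite.

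It remains to identify the diffeomorphism type of $\hat M$, for which I would use the nearest-point projection $\pi \colon \matH^n \to C$ already introduced above. This map is $1$-Lipschitz and $\Gamma$-equivariant, and convexity of $C$ guarantees that the outward normal geodesics issuing from $\partial C$ sweep out $\matH^n \setminus \interior C$ without overlapping: the normal exponential map defines a diffeomorphism $\partial C \times [0, \infty) \to \matH^n \setminus \interior C$, under which $\pi$ corresponds to the projection onto the first factor. Since this collar structure is $\Gamma$-equivariant, it descends to a diffeomorphism $\partial M \times [0, \infty) \cong \hat M \setminus \interior M$. Gluing this external collar back to $M$ exhibits $\hat M$ as $M$ with a collar attached to its boundary, hence $\hat M$ is diffeomorphic to $\interior M$.

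The main technical point to be handled carefully is the product structure on the complement $\matH^n \setminus \interior C$: one must verify that the outward normal exponential map is a global diffeomorphism, which is exactly where convexity of $C$ (equivalently, the nonnegativity of the outward second fundamental form of $\partial C$) is used, together with the nonpositive curvature of $\matH^n$. If $\partial M$ is only piecewise smooth, as for the manifolds with right-angled corners appearing in Theorem \ref{main:teo}, I would first smooth the corners of $C$ within the convex category, so that the projection and the normal exponential map are defined and smooth; this changes neither the diffeomorphism type of $M$ nor that of $\interior M$. The inclusion $\mathrm{Hull}(\Lambda)/_\Gamma \subseteq M$ established in the first paragraph, together with the standard equivalence between compactness of the convex core and geometric finiteness with empty cuspidal part, then completes the argument.
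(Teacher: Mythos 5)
Your proof is correct and takes essentially the same route as the paper: the outward normal exponential map $\partial C \times [0,+\infty) \to \matH^n \smallsetminus \interior{C}$ descends by $\Gamma$-equivariance to a collar on $\partial M$, giving $\hat M \cong \interior{M}$, and geometric finiteness follows because the convex core of $\hat M$ is trapped inside the compact convex submanifold $M$. You merely spell out details the paper leaves implicit (the limit-set/convex-hull inclusion $\mathrm{Hull}(\Lambda) \subseteq C$ and the smoothing of corners), so there is nothing substantive to change.
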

\begin{proof} 
For every $x\in \partial C$ and $t\in [0,+\infty)$ we let $f(x,t) \in \matH^n$ be the point reached at time $t$ by the unit speed geodesic starting from $x$ orthogonally to $\partial C$ and directed outside $C$. We get a diffeomorphism $f\colon \partial C \times [0,+\infty) \to \matH^n \smallsetminus C$. The leaf $f(\partial C \times \{t\})$ consists of all the points at distance $t$ from $C$. Since $f$ is $\Gamma$-equivariant, the function descends to a diffeomorphism $f\colon \partial M \times [0,\infty) \to \hat M \smallsetminus M$. Therefore $\hat M$ is diffeomorphic to the interior of $M$.

The extension $\hat M$ of $M$ is geometrically finite, since its convex core is contained in the compact convex submanifold $M$.
\end{proof}

\begin{prop}
Let $M \subset W$ be a convex compact submanifold of a complete hyperbolic manifold $W $ without boundary of the same dimension. The induced map $\pi_1 M \to \pi_1 W$ is injective and the covering of $W$ induced by the subgroup $\pi_1 M < \pi_1 W$ is isometric to the extension of $M$. 
\end{prop}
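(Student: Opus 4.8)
The plan is to exploit the fact that the inclusion $\iota\colon M \hookrightarrow W$ is a local isometry, so that lifting it to universal covers produces a developing map for the intrinsic hyperbolic structure of $M$, whose holonomy is exactly the representation $\iota_*$ we wish to understand. First I would fix identifications. Since $W$ is a complete hyperbolic manifold without boundary, I write $W = \matH^n/_\Lambda$ with $\Lambda < \Iso(\matH^n)$ discrete and torsion-free, and identify the universal cover $\tilde W$ with $\matH^n$ and $\pi_1 W$ with $\Lambda$. The inclusion $\iota$ then lifts to a map $\tilde\iota\colon \tilde M \to \matH^n$ which, because $\iota$ is a local isometry and $\matH^n$ carries the hyperbolic metric, is a developing map for $M$. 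It satisfies the equivariance $\tilde\iota(\gamma\cdot x) = \iota_*(\gamma)\cdot \tilde\iota(x)$ for all $\gamma\in\pi_1 M$ and $x \in \tilde M$, where $\iota_*\colon \pi_1 M \to \Lambda$ is the holonomy of this developing map.

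Next I would establish injectivity. By the characterisation recalled above, the compact convex manifold $M$ is complete and convex, so its developing map $\tilde\iota$ is a diffeomorphism onto a convex complete submanifold $C \subset \matH^n$; in particular $\tilde\iota$ is injective. If $\iota_*(\gamma) = 1$, then the equivariance gives $\tilde\iota(\gamma x) = \tilde\iota(x)$ for every $x$, whence $\gamma x = x$ by injectivity of $\tilde\iota$; since $\pi_1 M$ acts freely on $\tilde M$ as the deck group, this forces $\gamma = 1$. Thus $\iota_*$ is injective and restricts to an isomorphism onto its image $\Gamma := \iota_*(\pi_1 M) < \Lambda$.

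Finally I would identify the cover with the extension. By construction the extension of $M$ is $\hat M = \matH^n/_\Gamma$, where $\Gamma$ is the holonomy group of $M$; since the developing map $\tilde\iota$ of the induced structure has holonomy $\iota_*$, this group is exactly $\Gamma = \iota_*(\pi_1 M)$ (up to the conjugacy by an isometry of $\matH^n$ absorbed in the choice of $\tilde\iota$). On the other hand, the covering of $W = \matH^n/_\Lambda$ associated to the subgroup $\Gamma < \Lambda = \pi_1 W$ is by definition $\matH^n/_\Gamma$. The identity of $\matH^n$ therefore descends to an isometry between this covering and $\hat M$, which is the desired conclusion.

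The main obstacle, and the only genuinely non-formal point, is the identification carried out in the last step: one must check that the holonomy of $M$ regarded as an abstractly defined convex hyperbolic manifold (the datum used to build $\hat M$) coincides with the representation $\iota_*$ arising from the embedding. This reduces to the observation that the intrinsic hyperbolic structure on $M$ and the one induced from $W$ agree, so that the lift $\tilde\iota$ of the inclusion may legitimately be taken as the developing map defining $\hat M$; everything else is a routine unwinding of the equivariance of developing maps and the freeness of the deck action.
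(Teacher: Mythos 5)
Your proof is correct, but it is routed differently from the paper's, which is considerably shorter: the paper considers the preimage $p^{-1}(M)$ under the universal covering $p\colon \tilde W = \matH^n \to W$, observes that its connected components are convex submanifolds of $\matH^n$ and hence contractible, in particular simply connected, and deduces $\pi_1$-injectivity from covering-space theory; the identification of the induced covering with the extension is then asserted to hold ``by construction''. You instead work with the abstract universal cover $\tilde M$, take the lift $\tilde\iota\colon \tilde M \to \matH^n$ of the inclusion as a developing map, invoke the characterisation (quoted in the paper's preliminaries from \cite[Proposition I.1.4.2]{CEG}) that for a complete convex $M$ this map is a diffeomorphism onto a convex $C \subset \matH^n$, and get injectivity of $\iota_*$ from equivariance plus freeness of the deck action. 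The geometric core is the same in both arguments --- $M$ unwraps to an embedded convex subset of $\matH^n$, and indeed the components of the paper's preimage $p^{-1}(M)$ are precisely the translates $\lambda\cdot C$ for $\lambda \in \pi_1 W$ --- but the bookkeeping differs. What your version buys: the final identification of the covering with the extension $\hat M = \matH^n/_\Gamma$ is made fully explicit (both quotients are by $\Gamma = \iota_*(\pi_1 M)$, up to the conjugation absorbed in the choice of lift), and discreteness and freeness of $\Gamma$ come for free from $\Gamma < \pi_1 W$, with no need for the closest-point projection argument the paper uses earlier to show the holonomy acts freely. What the paper's version buys: brevity, and no need to verify that the intrinsic developing map of $M$ may be taken to be the lifted inclusion, which is exactly the non-formal point you rightly single out at the end.
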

\begin{proof}
The counterimage of $M$ in the universal cover $\tilde W = \matH^n$ of $W$ is a disjoint union of convex submanifolds. Since convex submanifolds in $\matH^n$ are contractible and in particular simply connected, we deduce that $M$ is $\pi_1$-injective in $W$. The covering of $W$ determined by the subgroup $\pi_1 M < \pi_1 W$ is the extension of $M$ by construction. 
\end{proof}

\subsection{Hyperbolic manifolds with right-angled corners}
We recall some of the terminology and techniques introduced in \cite{MRS}.

We represent the hyperbolic space $\matH^n$ via the disc model $B^n\subset \matR^n$. Let $P\subset B^n$ be the intersection of the (pairwise orthogonal) half-spaces $x_1, \ldots, x_n \geq 0$. 
A \emph{hyperbolic manifold with right-angled corners} is a topological $n$-manifold $M$, possibly with boundary, equipped with an atlas in $P$ and transition maps that are restrictions of isometries of $\matH^n$. We sometimes drop the words ``right-angled'' from the definition and simply call $M$ a manifold with corners. 

The boundary $\partial M$ of a manifold with corners $M$ is naturally stratified into connected closed $k$-dimensional strata called \emph{faces}, that we call \emph{vertices}, \emph{edges}, and \emph{facets} if $k=0,1$ and $n-1$, respectively.
Every face is abstractly itself a hyperbolic $k$-manifold with corners; note that a face may not be embedded, because it may be incident multiple times to the same lower-dimensional face. 

A hyperbolic manifold $M$ with corners may also be defined as a hyperbolic orbifold with mirrors, with isotropy groups $(\matZ_2)^k$ generated by $k$ reflections along orthogonal hyperplanes. We will not need this interpretation, however. Hyperbolic manifolds with geodesic boundary and right-angled polytopes are particular kinds of hyperbolic manifolds with corners. 

Let $M$ be a (possibly disconnected) hyperbolic manifold with corners. If we glue isometrically two disjoint embedded facets of $M$, we get a new hyperbolic manifold with corners. This is a crucial property. For instance, we may choose some disjoint embedded facets of $M$ and \emph{mirror} $M$ along them (that is, take two copies of $M$ and identify isometrically the pairs of selected facets).

\begin{prop} \label{mirror:prop}
Every compact connected hyperbolic manifold $M$ with right-angled corners, whose facets are all embedded, is contained in a closed connected hyperbolic manifold $W$ of the same dimension.
\end{prop}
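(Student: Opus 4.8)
The plan is to build $W$ by the standard colouring-and-mirroring construction for right-angled objects. Enumerate the facets of $M$ as $F_1,\ldots,F_m$; by hypothesis each is embedded. I assign to $F_i$ the $i$-th standard basis vector $e_i$ of the group $(\matZ_2)^m$, defining a \emph{colouring} $\lambda(F_i)=e_i$. For every $g\in (\matZ_2)^m$ I take a labelled copy $M_g$ of $M$, and for every facet $F_i$ and every $g$ I glue $F_i\subset\partial M_g$ isometrically (by the identity) to $F_i\subset\partial M_{g+e_i}$. Each of these is a gluing of two disjoint embedded facets, the crucial operation recalled just before the statement, so it is legitimate and produces a hyperbolic manifold with corners. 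I let $W$ be the resulting quotient of the $2^m$ copies, and I will check that it is a closed connected hyperbolic manifold containing $M=M_0$.

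Away from the corner strata the verification is immediate. A facet lies in a geodesic hyperplane $\{x_i=0\}$ of the local model, so gluing two copies across it produces a smooth hyperbolic chart, namely a reflection across a totally geodesic wall. Compactness of $W$ is clear, being a finite union of compact pieces, and $W$ has no boundary since every facet of every copy has been glued to a neighbouring copy. Connectivity follows from the connectedness of $M$ together with the fact that the colours $e_1,\ldots,e_m$ generate $(\matZ_2)^m$: crossing the facet $F_i$ moves $M_g$ to $M_{g+e_i}$, and since every copy is a full copy of $M$ containing all facets, each $M_g$ is reachable from $M_0$.

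The heart of the argument, and the step I expect to be the main obstacle, is showing that $W$ is a genuine \emph{manifold} (rather than an orbifold) along the images of the corner strata; this is exactly where the right-angled hypothesis is used. Near a codimension-$k$ stratum $E$ the local model of $M$ is the orthant $\{x_{i_1}\geq 0,\ldots,x_{i_k}\geq 0\}$, bounded by $k$ local facet germs. Because each facet is embedded, $E$ is incident to a given facet at most once, so these $k$ germs belong to $k$ \emph{distinct} facets; their colours are therefore distinct standard basis vectors and hence linearly independent in $(\matZ_2)^m$. Consequently the $2^k$ copies $M_{g+\sum_{j\in J}e_{i_j}}$, indexed by subsets $J\subseteq\{1,\ldots,k\}$, are pairwise distinct and assemble around $E$ exactly as the $2^k$ orthants of the $(\matZ_2)^k$-reflection unfolding of $\{x_{i_1}\geq 0,\ldots,x_{i_k}\geq 0\}$ fill a full neighbourhood in $\matH^n$. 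This produces a smooth hyperbolic chart at every point of $E$, and completes the proof that $W$ is a closed hyperbolic manifold containing $M$. I would stress that the embeddedness assumption is essential precisely here: a facet incident to some $E$ twice would force a repeated colour, collapsing the $2^k$ copies and creating a cone point instead of a manifold point.
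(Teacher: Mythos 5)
Your proof is correct and is essentially the paper's own argument: the colouring-and-mirroring construction, in the "simultaneous" formulation with $2^k$ labelled copies that the paper states as equivalent to iterative mirroring, specialised to the colouring that assigns distinct colours to distinct facets (which the paper offers parenthetically). The only difference is that you spell out the verification of the smooth chart along codimension-$k$ strata via embeddedness and independence of the colours, a point the paper leaves implicit in its orbifold-covering remark.
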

\begin{proof}
We construct $W$ from $M$ by colouring and mirroring.
Assign to each facet of $M$ a colour in $\{1,\ldots, k\}$, so that adjacent facets have different colours. (For instance, assign distinct colours to distinct facets.)
Mirror $M$ iteratively along the facets coloured with $1, 2, \ldots, k$. We end up with a closed hyperbolic $W$ tessellated into $2^k$ copies of $M$. In fact we get an orbifold covering $W \to M$.

This is equivalent to taking $2^k$ copies $M_{a_1,\ldots, a_k}$ of $M$ with $a_i \in \{0,1\}$, and identifying every facet in $M_{a_1,\ldots,a_k}$ coloured with $c\in \{1,\ldots,k\}$ with the corresponding facet in $M_{a_1,\ldots, a_{c-1}, 1-a_c, a_{c+1}, \ldots, a_k}$.

If $M$ is compact, oriented and connected, then $W$ also is by construction.
\end{proof}

By smoothing its boundary we can transform every hyperbolic manifold with corners into a convex hyperbolic manifold. By combining this fact with Proposition \ref{mirror:prop} we get a class of manifolds for which Question \ref{main:quest} has a positive answer.

\begin{cor}
Let $M$ be a compact connected hyperbolic $n$-manifold with right-angled corners, whose facets are all embedded. By smoothing 
$\partial M$, we get a smooth manifold that embeds as a convex submanifold into a closed hyperbolic $n$-manifold.
\end{cor}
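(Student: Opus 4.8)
The plan is to deduce the corollary from Proposition~\ref{mirror:prop} together with a convexity-preserving smoothing of the corners. First I would apply Proposition~\ref{mirror:prop} to embed $M$ as a compact submanifold with right-angled corners in some closed hyperbolic $n$-manifold $W$. Since $W$ has no boundary it extends $M$ on every side, so there is room to modify $M$ near $\partial M$ while staying inside $W$. The whole task then reduces to smoothing $\partial M$ inside $W$ in such a way that the resulting smooth submanifold $M'$ is convex, and to checking that $M'$ is diffeomorphic to the smoothing of $M$.

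To verify convexity I would use that it is a local property (\cite[Corollary I.1.3.7]{CEG}): it suffices to produce a convex neighbourhood of every point of $\partial M'$. The key observation is that $M$, having right-angled corners, is locally isometric to the orthant $P = \{x_1 \geq 0, \ldots, x_n \geq 0\} \subset \matH^n$, which is an intersection of pairwise orthogonal half-spaces and is therefore convex; in particular $M$ is already locally convex as a manifold with corners, with totally geodesic facets meeting at interior dihedral angles $\pi/2 < \pi$. The only failure of smoothness occurs along the strata of codimension $\geq 2$. I would therefore round these corners slightly inward, replacing $P$ in the local model by a smooth region $P' \subset P$ that coincides with $P$ away from a neighbourhood of the codimension-$\geq 2$ strata and that cuts each corner with a convex cap. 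Because the corners are convex and the cap is pushed toward the interior, each $P'$ stays convex; along the facets, where $\partial M$ is already totally geodesic and hence locally convex, nothing needs to be changed.

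The delicate point, which I expect to be the main obstacle, is to patch these local roundings into a single globally smooth convex boundary, since strata of different codimensions meet one another and the caps produced near an edge and near a vertex must be compatible. I would handle this by performing the rounding inductively over the skeleta of the corner stratification, smoothing the deepest (lowest-dimensional) strata first and then the strata of larger dimension, at each stage keeping the modification inside the convex region already obtained so that convexity is preserved throughout; alternatively, one fixes once and for all a single convex rounding of the standard orthant $P$ that is invariant under the symmetries of $P$ and transports it through the charts, whose transition maps are restrictions of isometries of $\matH^n$ carrying orthant to orthant. Once the rounding is carried out, $M'$ is a smooth compact convex submanifold of the closed hyperbolic manifold $W$, and since rounding the corners does not change the diffeomorphism type it realises the smoothing of $M$, as required.
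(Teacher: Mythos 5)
Your proposal follows the paper's own route: the paper derives this corollary precisely by combining Proposition~\ref{mirror:prop} with the (asserted) fact that smoothing the right-angled corners yields a convex hyperbolic manifold, which is exactly your argument, with the convexity of the local orthant model and the inward convex rounding spelled out in more detail than the paper provides. Your elaboration of the smoothing step (locality of convexity, rounding the standard orthant equivariantly or inductively over strata) is a correct filling-in of what the paper leaves implicit, so the two proofs coincide in substance.
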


\begin{rem} \label{many:rem}
For a given compact manifold $M$ with non-empty boundary, the property of having a hyperbolic structure with right-angled corners may look more restrictive than having a convex hyperbolic structure. Contrary to that impression, in dimension 2 and 3 any compact manifold $M$ with boundary that has a convex hyperbolic structure also has a hyperbolic structure with right-angled corners. 

Indeed every compact surface with boundary has both structures, and a consequence of geometrisation is that in dimension 3 the manifold $M$ has any of the two structures if and only if $M$ is irreducible and algebraically atoroidal (that is, $\pi_1(M)$ does not contain $\matZ \times \matZ$). These conditions are certainly necessary for $M$ having a convex hyperbolic structure, and are also sufficient to equip $M$ with a hyperbolic structure with right-angled corners: 
it suffices to decorate $\partial M$ with a sufficiently complicated trivalent graph $\Gamma \subset \partial M$ to ensure that $M$ has a hyperbolic structure with right-angled corners bent precisely at $\Gamma$, see \cite[Page 83 and Proposition 7.2]{O}.
\end{rem}

\subsection{Regular right-angled polytopes and thickenings} \label{thickenings:subsection}
In dimension 3 the abundance of right-angled polytopes is regulated by Andreev's Theorem. In dimension 4 our knowledge is much more limited, and the main tools used in the literature to construct manifolds are the regular right-angled polytopes: the ideal 24-cell and the compact 120-cell. Their facets are ideal right-angled octahedra and compact right-angled dodecahedra, respectively. The 2-faces of the latter are right-angled pentagons.

The existence of such regular polytopes allows us to define a \emph{thickening} of manifolds with right-angled corners in particular cases, as follows. This procedure was first described in \cite{M}. 

Suppose that $N$ is a hyperbolic $n$-manifold with right-angled corners, tessellated into right-angled regular pentagons, dodecahedra, or ideal octahedra. Of course we have $n=2$, 3, or 3, respectively. The \emph{thickening} of $N$ is the hyperbolic $(n+1)$-manifold with right-angled corners $M$, defined from $N$ by attaching to each pentagon (or dodecahedron, octahedron) $P$ two regular right-angled dodecahedra (or 120-cells, 24-cells), one ``above'' and the other ``below'' $P$. Two dodecahedra (or 120-cells, 24-cells) attached from the same side (above or below) to two pentagons (or dodecahedra, octahedra) $P_1$ and $P_2$ that intersect in some edge (or face) $f$ should be attached correspondingly along their faces (or facets) incident to $f$ via the unique possible isometry that matches with $f$. This is possible because all the objects involved are regular, so every self-isometry of any facet extend to a self-isometry of the object.

The thickening $M$ of $N$ is a hyperbolic $(n+1)$-manifold with corners that deformation retracts onto $N$. If $N$ is tessellated into $k$ pentagons (or dodecahedra, octahedra), then $M$ is tessellated into $2k$ dodecahedra (or 120-cells, 24-cells). The facets of $M$ are of three kinds: the \emph{vertical} ones that contain (and correspond to) the facets of $N$, and the \emph{top} and \emph{bottom} ones that are contained in the dodecahedra (or 120-cells, 24-cells) that were attached above or below, and are not adjacent to the original $N$.

\section{The construction}
We prove here Theorem \ref{main:teo}, expanding some of the ideas of \cite{MRS}.

We define for every triple of integers $e,i,g$ with $i\geq 0$ and
$$g \geq 2\big(|e|+i+1\big)$$
an oriented hyperbolic 4-manifold with corners $M^{e,i,g}$. The manifold $M^{e,i,g}$ is diffeomorphic (after smoothing its corners) to the disc bundle over the genus-$g$ surface with Euler number $e$. It is tessellated into right-angled 120-cells and contains $i$ \emph{islands}, some zones (to be defined below) that will be used to perform the plumbings. Its facets are all embedded, so that Proposition \ref{mirror:prop} can be applied.

To improve clarity, we subdivide the construction of $M^{e,i,g}$ in some steps in the next sections. We perform the plumbing in Section \ref{sec:plumbing}, and conclude the proof of Theorem \ref{main:teo} in Section \ref{sec:volume} by estimating the volume of $M$ and $W$. 

\subsection{The surface with corners $\Sigma$} \label{sec:Sigma}
The starting point of our construction is the hyperbolic surface with corners $\Sigma$ tessellated into 8 right-angled pentagons shown in Figure \ref{sigma_new:fig}-(left). The surface $\Sigma$ is topologically a torus with one hole. Its boundary has four vertices, two edges of length $2\ell$, and two edges of length $4\ell$, where $\ell$ is the length of the side of the right-angled regular pentagon.

The 1-skeleton of $\Sigma$ contains a $\theta$-graph $\Theta$, onto which $\Sigma$ deformation retracts. The $\theta$-graph is the union of three oriented curves $\gamma_1, \gamma_2, \gamma_3$, whose sum $\gamma_1+ \gamma_2 +\gamma_3=0$ vanishes homologically. The curves $\gamma_0$ and $\gamma_1$ are shown in Figure \ref{sigma_casi_new:fig}. 

We assign to $\Sigma$ the orientation of Figure \ref{sigma_new:fig}.
There is an orientation-reversing isometry of $\Sigma$ sending $\gamma_1$ to $\gamma_2$.

\begin{figure}
 \begin{center}
 \labellist
\small\hair 2pt
\pinlabel $A$ at -3 45
\pinlabel $A$ at 178 45
\pinlabel $B$ at 48 86
\pinlabel $B$ at 128 2
\pinlabel $\Theta$ at 100 50
\pinlabel $\gamma_0$ at 230 83
\pinlabel $\gamma_1$ at 230 58
\pinlabel $\gamma_2$ at 222 28
\endlabellist
  \includegraphics[width = 11 cm]{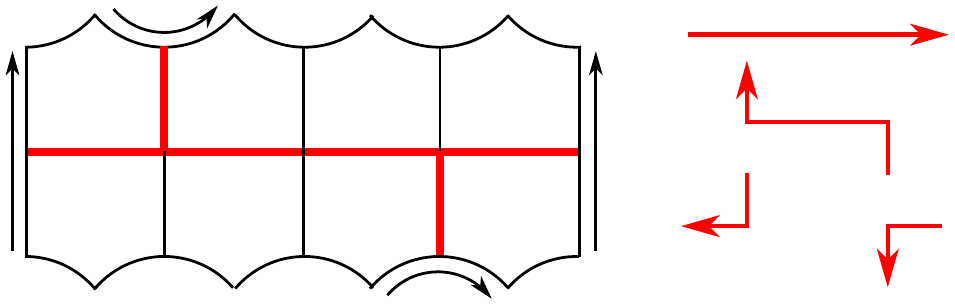}
 \end{center}
 \caption{A hyperbolic holed torus with corners $\Sigma$ tessellated into 8 right-angled pentagons. Edges marked with the same letters should be identified isometrically following the arrows (left). The holed torus deformation retracts onto a $\theta$-graph $\Theta$, drawn in red, which is in turn the union of three oriented simple closed curves $\gamma_0$, $\gamma_1$, $\gamma_2$ (right).}
 \label{sigma_new:fig}
\end{figure}

\begin{figure}
 \begin{center}
 \labellist
\small\hair 2pt
\pinlabel $A$ at -3 45
\pinlabel $A$ at 178 45
\pinlabel $B$ at 48 86
\pinlabel $B$ at 128 2
\pinlabel $A$ at 214 45
\pinlabel $A$ at 395 45
\pinlabel $B$ at 265 86
\pinlabel $B$ at 345 2
\endlabellist
  \includegraphics[width = 12.5 cm]{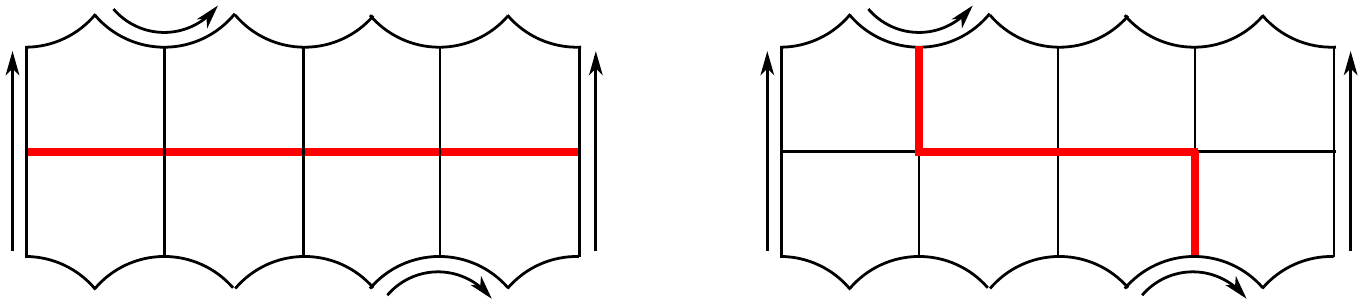}
 \end{center}
 \caption{The curves $\gamma_0$ and $\gamma_1$ in $\Sigma$.}
 \label{sigma_casi_new:fig}
\end{figure}

\subsection{The 3-manifold with corners $N$} \label{sec:N}
We now construct an oriented hyperbolic 3-manifold with corners $N$ that has one face isometric to $\Sigma$. 

Figure \ref{8dodecaedri_colorato:fig} shows a right-angled polyhedron $P \subset \matH^3$ obtained by taking the eight dodecahedra adjacent to a single vertex $v$ in the tessellation of $\matH^3$ into right-angled dodecahedra. The polyhedron $P$ has the symmetries of a cube with centre $v$.

\begin{figure}
 \begin{center}
  \includegraphics[width = 5 cm]{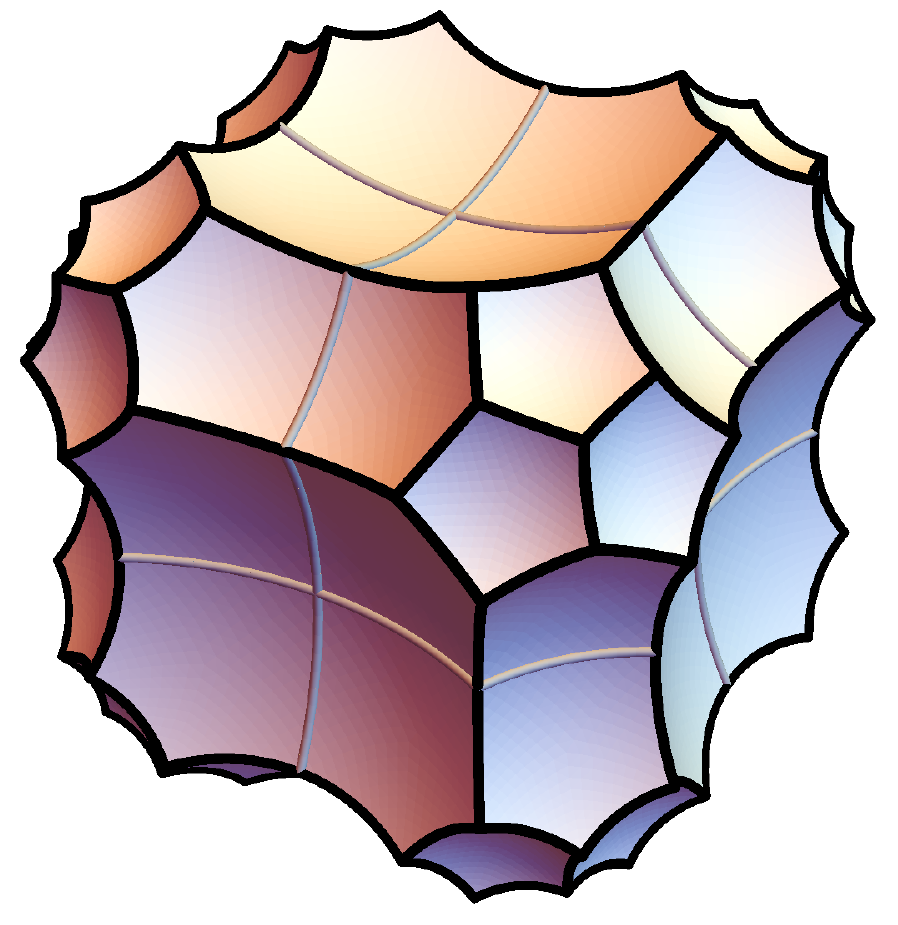}
 \end{center}
 \caption{A right-angled hyperbolic polyhedron $P$ tessellated into eight right-angled dodecahedra.}
 \label{8dodecaedri_colorato:fig}
\end{figure}

\begin{figure}
 \begin{center}
  \includegraphics[width = 12 cm]{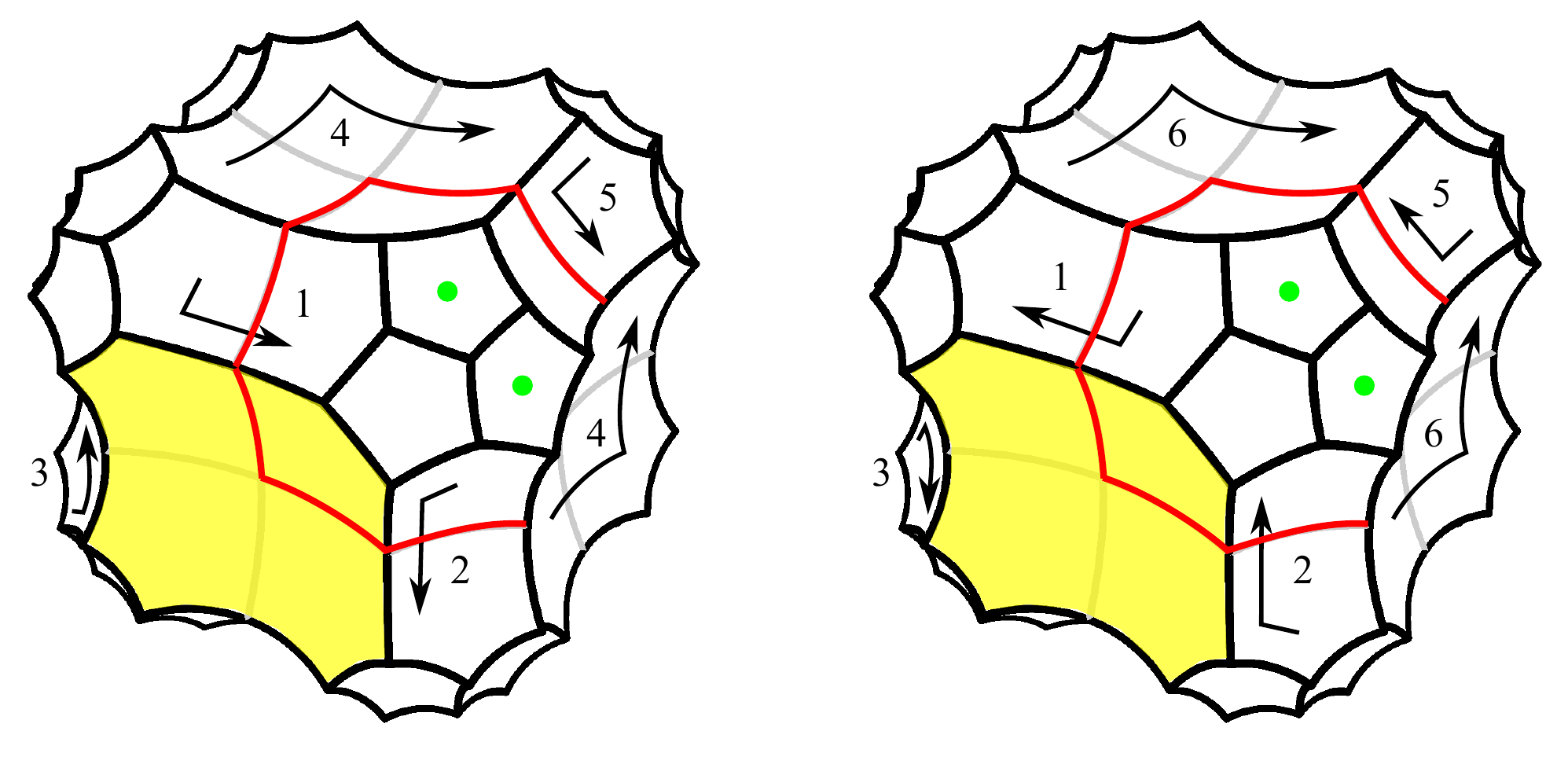}
 \end{center}
 \caption{Two copies of $P$. We identify isometrically six pairs of facets, numbered here as $1,\ldots, 6$, following the arrows. The result is a hyperbolic 3-manifold with corners that contains $\Sigma$ as a face (made of the two yellow octagons). Unfortunately the hyperbolic 3-manifold with corners contains a non-embedded face (marked here with green dots), so a more complicated construction is needed. }
 \label{8dodecaedri_new:fig}
\end{figure}

We take two copies of $P$ and identify isometrically some of their faces as prescribed by Figure \ref{8dodecaedri_new:fig}. Six pairs of faces, numbered from 1 to 6, are identified. The result is a hyperbolic 3-manifold with corners. Note that the two octagonal yellow faces shown in the figure are glued along their sides, to form a face isometric to $\Sigma$. 

This 3-manifold with corners just constructed is almost fine for our purposes, except that unfortunately it has some non-embedded faces, a fact that we want to avoid. 
Indeed, the four pentagons marked with a green dot in Figure \ref{8dodecaedri_new:fig} are attached along their boundaries to form a single non-embedded octagonal face. We already faced this issue in \cite{MRS}. There, we solved it by mirroring the polyhedron along some faces. Here we follow another strategy that is more suited to the present setting.

To resolve this problem we use a bigger polyhedron $Q \supset P$ instead of $P$, built as follows. Let $e$ be the edge separating the two green-dotted faces in $P$. We attach three dodecahedra to $e$, in the only possible way: we first attach one dodecahedron to each green-dotted face, and then a third one to cap off the resulting forbidden concave angle of 270 degrees at $e$. This operation enlarges $P$ to a bigger convex polyhedron in $\matH^3$, tessellated into $8+3=11$ dodecahedra. The edge $e$ lies in the interior of the new polyhedron. By a careful analysis we find that the boundary pattern of the corners changes as shown in Figure \ref{enlargement:fig}.

\begin{figure} 
 \begin{center}
  \labellist
\small\hair 2pt
\pinlabel $e$ at 140 240
\endlabellist
  \includegraphics[width = 10 cm]{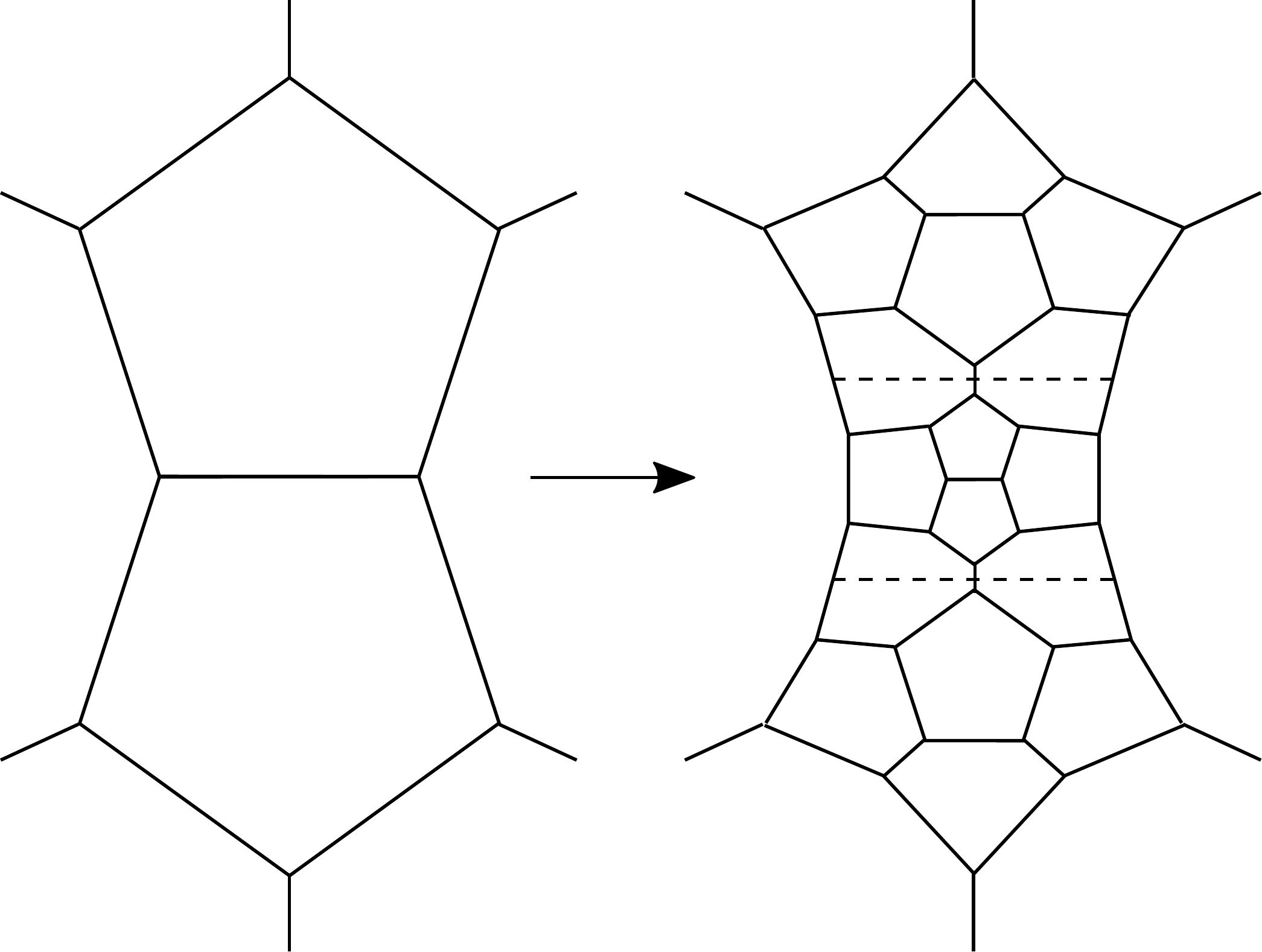}
 \end{center}
 \caption{We attach three dodecahedra to an external edge $e$ separating two boundary pentagons. 
The boundary pattern of the corners changes as shown here. The dashed lines (which are not corners) separate the three dodecahedra.}
 \label{enlargement:fig}
\end{figure}

We do this enlargement in a more symmetric way, not only on the pair of green-dotted faces of $P$, but also simultaneously on 7 more pairs of faces of similar kind, so on 8 pairs overall. We do this to the other pair of adjacent pentagons incident to the face marked with a 5 in Figure \ref{8dodecaedri_new:fig}-(left), because these have the same problem noted above (they become the same face after the identification). We also do it the other 6 pairs of adjacent pentagons obtained from these two by applying all symmetries of $P$ that preserve the yellow octagon. Let $Q$ be the resulting convex hyperbolic polyhedron, tessellated in $8+3\cdot 8 = 32$ dodecahedra. We take two copies of $Q$ and identify the faces as suggested in Figure \ref{8dodecaedri_new:fig}. These identifications were defined for $P$, but they also extend uniquely to $Q$ because $Q$ has been enlarged in a symmetric way. These identifications produce a hyperbolic 3-manifold with corners $N$.

We have constructed our 3-manifold with corners $N$. We now study its topology.
Figure \ref{8dodecaedri_new:fig} shows in each copy of $P$ a red closed curve $\alpha$, contained in the 1-skeleton of the boundary. The cone over $\alpha$ with centre $v$ is a disc $D$ contained in the 2-skeleton of the tessellation of $P$. The disc $D$ is tessellated into 3 pentagons and is pleated at right angles along three orthogonal edges exiting from $v$. The two such discs $D$ and $D'$ in the two copies of $P$ glue to form a torus with one hole $F \subset N$. 

The boundary $\partial F$ lies in the yellow face that is isometric to $\Sigma$, and coincides there with either the curve $\gamma_1$ or $\gamma_2$, depending on the chosen identification between the yellow face and $\Sigma$. We fix once for all an identification that sends $\partial F$ to $\gamma_1$. We orient $N$ coherently with the orientation of $\Sigma$.

It is important to note that $N$ deformation retracts onto $F\cup \Theta$. This can be proved by looking at Figure \ref{8dodecaedri_new:fig}.

\subsection{The 3-manifold with corners $X^{e,i,g}$} \label{sec:X}
We now construct another oriented hyperbolic 3-manifold with corners $X^{e,i,g}$, that depends on the initial parameters $e,i,g$. Recall that $g \geq 2\big(|e|+i+1\big)$.

\begin{figure}
 \begin{center}
 \labellist
\small\hair 2pt
\pinlabel $v$ at 133 65
\endlabellist
  \includegraphics[width = 8 cm]{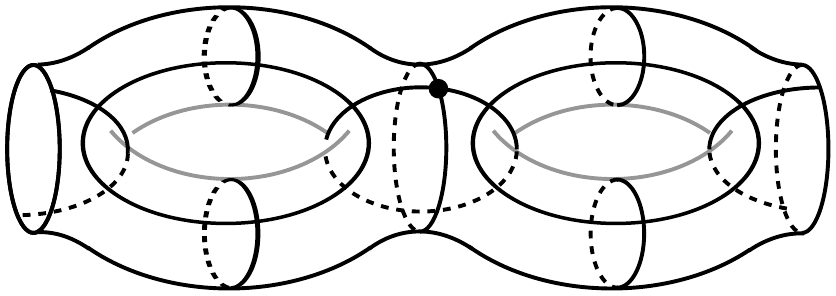}
 \end{center}
 \caption{A hyperbolic surface of genus two with two boundary components, tessellated into 16 right-angled pentagons. The surface $S_{e,i,g}$ contains $i$ portions of this type.}
 \label{isole:fig}
\end{figure}

\begin{figure}
 \begin{center}
 \labellist
\small\hair 2pt
\endlabellist
  \includegraphics[width = 12.5 cm]{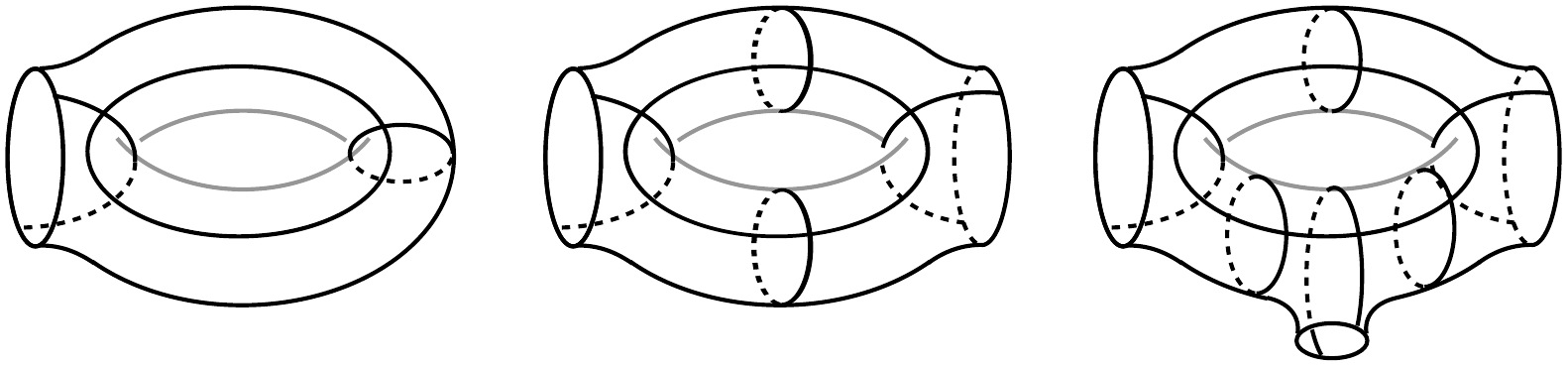}
 \end{center}
 \caption{A hyperbolic surface with $k$ boundary components and genus 1, tessellated into $4k$ right-angled pentagons. We show here the cases $k=1,2,3$, the picture for a general $k$ is easily deduced. }
 \label{Euler:fig}
\end{figure}

We first build a surface $S_{e,i,g}$ tessellated into pentagons, as follows.
We start with a row of $i$ copies of the surface shown in Figure \ref{isole:fig}, to which we attach some pieces as in Figure \ref{Euler:fig} in order to ensure that $S_{e,i,g}$ has exactly $|e|$ boundary components and genus $g-2|e|$. This is possible since $g-2|e|\geq 2(i+1)$. The surface has $\chi(S_{e,i,g}) = 2+3|e|-2g$ and is tessellated into $8g - 12|e| - 8$ right-angled pentagons.

We construct a hyperbolic 3-manifold with corners $X_0^{e,i,g}$ by thickening $S_{e,i,g}$ as described in Section \ref{thickenings:subsection}. 
Clearly $X_0^{e,i,g}$ deformation retracts onto $S_{e,i,g}$. Each of the $|e|$ boundary components of $S_{e,i,g}$ has length $4\ell$ (again, $\ell$ is the edge length of the right-angled pentagon) and is contained in an annular face of $X_0^{e,i,g}$ tessellated into 8 right-angled pentagons. This face is like $\Sigma$ from Figure \ref{sigma_new:fig}, except that the edges labeled with B are not identified. By attaching isometrically two pentagonal faces of $X_0^{e,i,g}$ incident to these two B edges we transform this face into a new face that is isometric with $\Sigma$. We apply one such identification at every boundary component of $X_0^{e,i,g}$. To be more precise, we do this in an orientation-coherent way: we fix an orientation for $S_{e,i,g}$, and choose the B edges coherently everywhere, so that with the induced orientation on $X^{e,i,g}$ these $|e|$ new faces will all be orientation-preservingly isometric to $\Sigma$.

The result is an oriented hyperbolic 3-manifold with corners $X^{e,i,g}$. It has $|e|$ faces $\Sigma_1,\ldots, \Sigma_{|e|}$, each orientation-preservingly isometric to $\Sigma$. It is tessellated into $16g - 24|e| - 16$
right-angled dodecahedra. By construction, it deformation retracts onto $S_{e,i,g} \cup \Theta_1 \cup \cdots \cup \Theta_{|e|}$ where $\Theta_i \subset \Sigma_i$ is the corresponding $\theta$-graph.

\subsection{The skeleton $Y^{e,i,g}$ and the surface $S$} \label{sec:T}
We combine all the objects defined in the previous sections to build a 3-dimensional object $Y^{e,i,g}$ tessellated by right-angled dodecahedra
, called the \emph{skeleton} (since the desired 4-manifold with corners $M^{e,i,g}$ will be a thickening of $Y^{e,i,g}$). 

Recall that $N$ and $X^{e,i,g}$ have respectively one and $|e|$ faces that are orientation-preservingly identified with $\Sigma$, and that $\Sigma$ has an orientation-reversing isometric involution $\varphi$ sending $\gamma_1$ to $\gamma_2$. The skeleton $Y^{e,i,g}$ is constructed by taking $X^{e,i,g}$ and then attaching to each face $\Sigma_j$, $j=1,\ldots, |e|$ two copies $N_j, N_j'$ of $N$ along their faces identified with $\Sigma$. We identify $N_j$ via the identity and $N_j'$ via $\varphi$. 

The skeleton $Y^{e,i,g}$ is a 3-dimensional object. If $e\neq 0$ it is not a manifold, because it is singular at the surfaces $\Sigma_1,\ldots, \Sigma_{|e|}$, to each of which three manifolds with boundary are locally attached.

Let $F_j\subset N_j$, $F_j' \subset N_j'$ be the holed tori introduced in Section \ref{sec:N}. The crucial fact here is that the surfaces $F_j$, $F_j'$, and $S_{e,i,g}$ are attached to the curves $\gamma_1, \gamma_2,$ and $\gamma_0$ of $\Theta_j$, respectively. Therefore
$$S = S_{e,i,g} \cup F_1 \cup F_1' \cup \dots \cup F_{|e|} \cup F_{|e|}'$$
is a surface of genus $g$. For instance, for 
$(e,i,g) = (2,0,6)$ we get a surface $S$ of genus $6$ as in Figure \ref{S_new:fig}. Note that $S_{e,i,g}$ is totally geodesic, while each $F_j$ and $F_j'$ is pleated along some arcs and vertices. 
Another important fact is that by construction the skeleton $Y^{e,i,g}$ deformation retracts onto $S$.

\begin{figure}
 \begin{center}
 \labellist
\small\hair 2pt
\pinlabel $F_1$ at 83 60
\pinlabel $F_1'$ at 71 260
\pinlabel $F_2$ at 400 60
\pinlabel $F_2'$ at 410 260
\pinlabel $S_{2,0,6}$ at 245 155
\pinlabel $\Theta_1$ at 100 150
\pinlabel $\Theta_2$ at 385 150
\endlabellist
  \includegraphics[width = 10 cm]{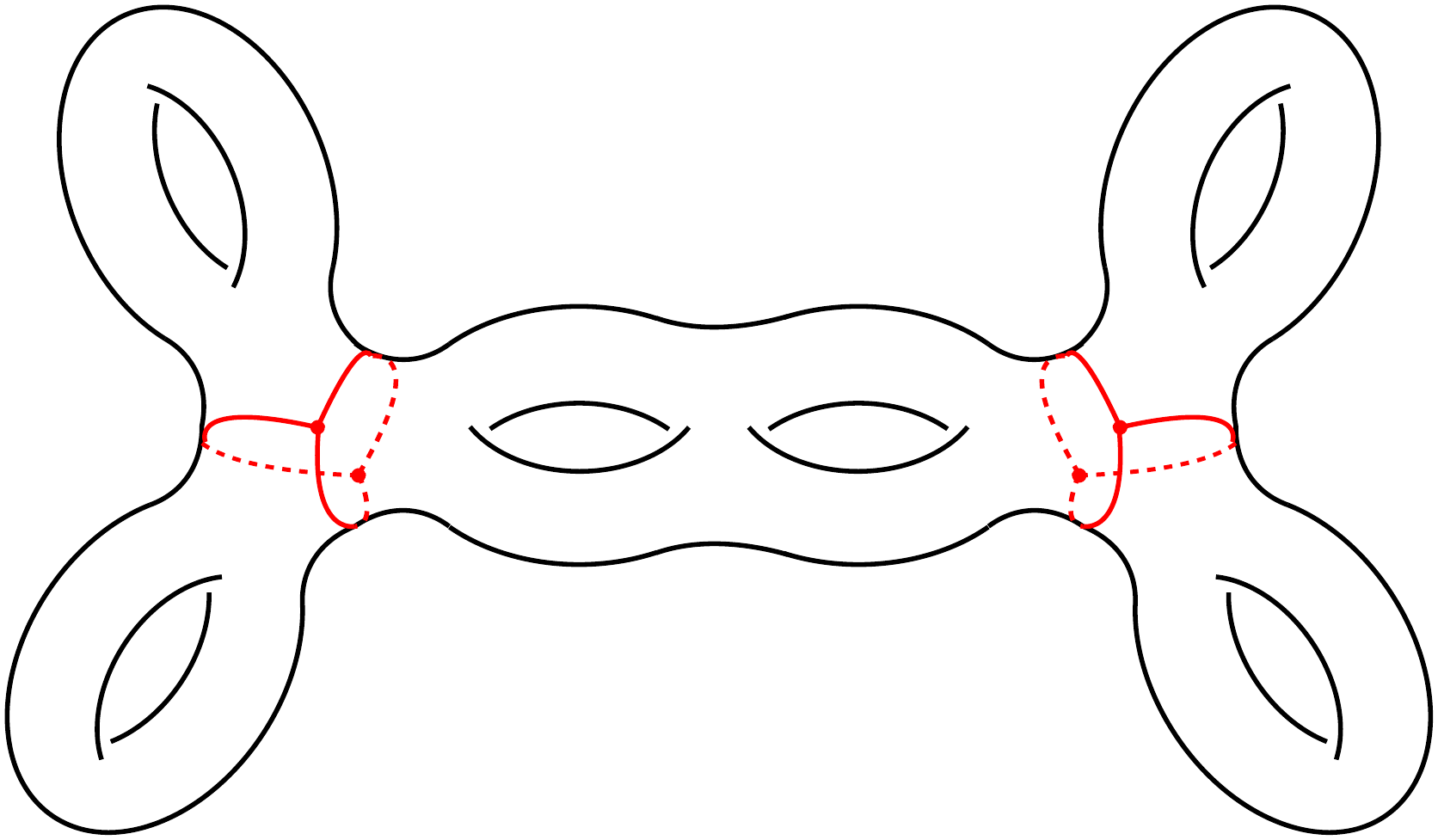}
 \end{center}
 \caption{The surface $S \subset M^{2,0,6}$. }
 \label{S_new:fig}
\end{figure}

\subsection{The 4-manifold with corners $M^{e,i,g}$} \label{sec:M}
We now construct the hyperbolic 4-manifold with corners $M^{e,i,g}$ by appropriately thickening the skeleton $Y^{e,i,g}$, as sketched in Figure \ref{Testendi:fig}-(left). This is done rigorously as follows. For every $j=1,\ldots, |e|$ we consider the oriented hyperbolic 3-manifold with corners $N_j \cup_{\varphi} N_j'$ and thicken it as described in Section \ref{thickenings:subsection}. We also thicken $X^{e,i,g}$. All these thickenings are oriented.

\begin{figure}
 \begin{center}
 \labellist
\small\hair 2pt
\pinlabel $\Sigma_i$ at 97 170
\pinlabel $N_i$ at 60 190
\pinlabel $N_i'$ at 160 190
\pinlabel $X^{e,i,g}$ at 105 125
\pinlabel $\Sigma_i$ at 345 170
\pinlabel $\Sigma_i$ at 345 115
\pinlabel $N_i$ at 308 190
\pinlabel $N_i'$ at 408 190
\pinlabel $X^{e,i,g}$ at 353 28
\endlabellist
  \includegraphics[width = 11 cm]{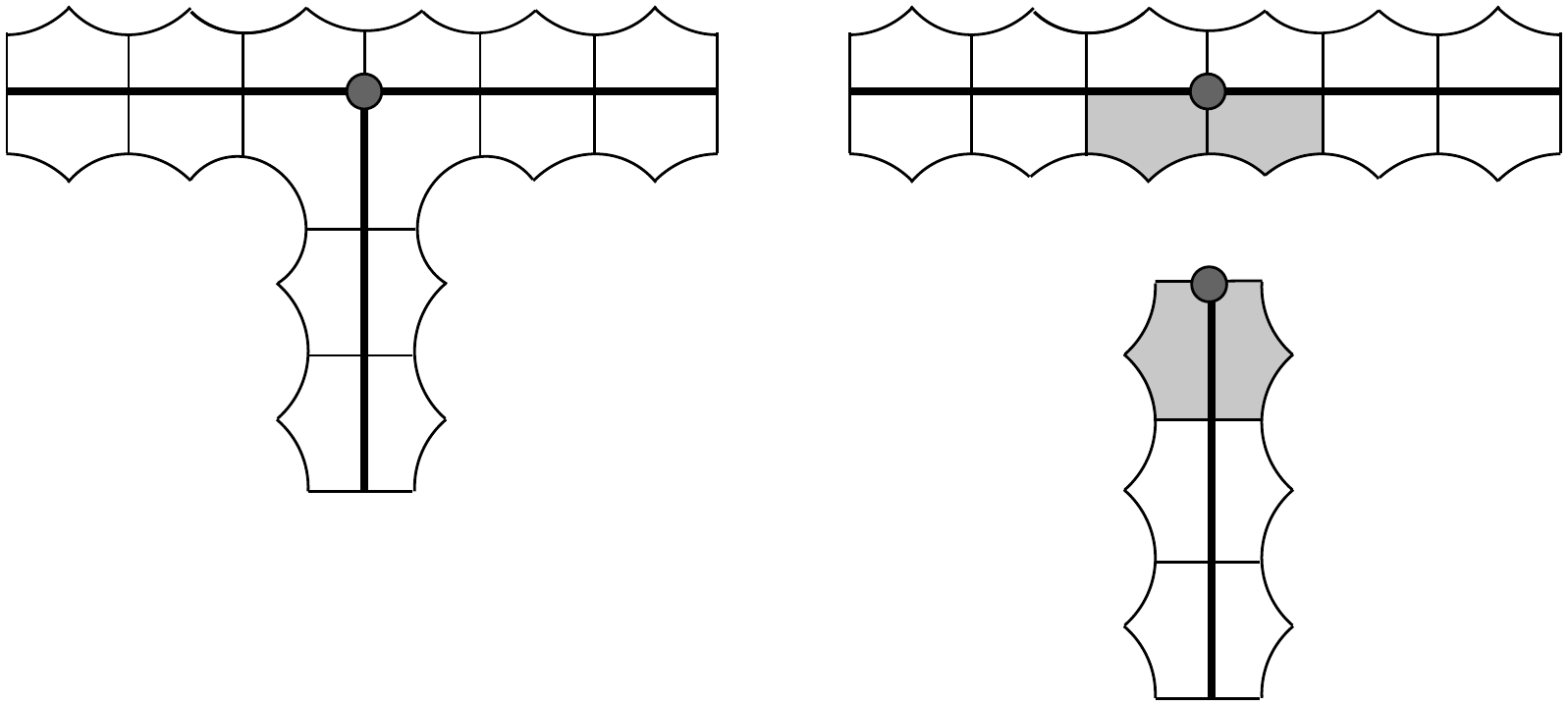}
 \end{center}
 \caption{We thicken the 
skeleton $Y^{e,i,g}$ to a hyperbolic 4-manifold with corners by attaching 120-cells. Here we draw the construction in dimension 2, with segments and pentagons instead of dodecahedra and 120-cells (left). This may be seen rigorously as a two-step procedure, where we first thicken $N_i\cup N_i'$ and $X^{e,i,g}$ separately and then we identify the grey 120-cells (right).}
 \label{Testendi:fig}
\end{figure}

Now we identify in pairs (in an orientation-preserving way) the sixteen 120-cells in the thickening of $X^{e,i,g}$ incident to $\Sigma_j$ with the sixteen 120-cells in the thickening of $N_j \cup_\varphi N_j'$ that are incident to $\Sigma_j$ from below, as in Figure \ref{Testendi:fig}-(right). There is a natural unambiguous way to do this, as suggested by the figure. Note that by construction all the 120-cells involved are distinct. The fact that such an identification produces indeed a manifold with corners (and in particular does not produce any forbidden concave angle of 270 degrees) is due to hyperbolic geometry (this would not be true in Euclidean geometry). The absence of forbidden concave angles was proved in \cite[Lemma 2.5]{MRS} and the same argument applies here.

If we perform this identification for every $j=1,\ldots,|e|$ we get at the end an oriented hyperbolic 4-manifold with corners $M^{e,i,g}$. By construction $M^{e,i,g}$ deformation retracts onto its skeleton $Y^{e,i,g}$, and hence onto $S$. After smoothing its corners, the manifold $M^{e,i,g}$ is a disc bundle over $S$.  

\begin{prop}
The disc bundle $M^{e,i,g}$ over the genus-$g$ surface $S$ has Euler number $\pm e$.
\end{prop}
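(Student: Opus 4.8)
The plan is to identify the Euler number of the disc bundle $M^{e,i,g}$ with the self-intersection number $[S]\cdot[S]$ of the zero section $S$, equivalently with the Euler number of the rank-$2$ normal bundle $\nu$ of $S$ in $M^{e,i,g}$. I would compute this as the obstruction to a nowhere-zero section of $\nu$, realised concretely as the signed count of intersection points of $S$ with a generic push-off $S'$.

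The push-off I would use comes from the thickening itself. Over the interior of each totally geodesic or pleated piece of $S$ --- that is, over $S_{e,i,g}$ and over each $F_j$ and $F_j'$ --- the construction in Section \ref{thickenings:subsection} supplies a canonical \emph{vertical} normal direction, pointing into the $120$-cells attached ``above''. This is a nowhere-zero local section of $\nu$, so pushing $S$ off along it keeps $S'$ disjoint from $S$ over the interior of every piece. Consequently all intersections of $S$ with $S'$ are concentrated near the gluing locus $\Theta_1\cup\cdots\cup\Theta_{|e|}$, and the problem reduces to a purely local computation at each $\Sigma_j$.

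At a single $\Sigma_j$ the three pieces $S_{e,i,g}$, $F_j$, $F_j'$ are attached along the curves $\gamma_0,\gamma_1,\gamma_2$ of the $\theta$-graph $\Theta_j$. Here the vertical sections of adjacent pieces no longer agree, and I would compute the net winding of the vertical framing as one circulates around $\Theta_j$. This uses three ingredients: the above/below attachment pattern of the $120$-cells along $\Sigma_j$ recorded in Figure \ref{Testendi:fig}; the pleating of $F_j$ and $F_j'$ along their arcs and vertices; and the orientation-reversing involution $\varphi$ that swaps $\gamma_1$ with $\gamma_2$ and is used to glue $N_j'$. The expected outcome is that the framing winds by exactly $\pm 1$ around $\Theta_j$, producing a single transverse intersection point of $S$ and $S'$ there, of a sign fixed by the orientations chosen in Sections \ref{sec:X}--\ref{sec:T}.

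Summing these contributions then finishes the proof: the totally geodesic piece $S_{e,i,g}$ carries a globally consistent vertical framing and contributes $0$, while each of the $|e|$ copies $\Sigma_j$ contributes $\pm 1$ with a uniform sign, so $[S]\cdot[S]=\pm e$. The overall sign ($+e$ versus $-e$) is governed by the coherent choice of $B$-edges and the orientation of $S_{e,i,g}$ fixed earlier, which accounts for the ``$\pm$'' in the statement. I expect the main difficulty to be the local winding computation at $\Sigma_j$: correctly tracking the vertical framing through the pleated tori $F_j, F_j'$ and through the involution $\varphi$, and confirming that the net winding is exactly $\pm 1$ rather than some other integer. This is the geometric heart of the argument, and is where the specific combinatorics of $N$ and the hyperbolic right angles (as in \cite[Lemma 2.5]{MRS}) enter decisively.
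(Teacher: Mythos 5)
Your framework is the right one, and it is in spirit the paper's own: the paper also computes the Euler number as a localized self-intersection count, via the formula of Gromov--Lawson--Thurston \cite{GLT} for a surface contained in the 2-skeleton and pleated along edges and vertices, with the local contributions imported from \cite{MRS}. The difference is that the paper localizes all the way down to the \emph{vertices} of $S$: vertices whose links are of types (1) and (2) in Figure \ref{links:fig} contribute $0$, while the two vertices of each $\Theta_j$ (type (3)) contribute $\pm\tfrac12$ each, with the same sign everywhere, giving $\pm\tfrac12\cdot 2\cdot|e|=\pm e$.

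The genuine gap in your proposal is that the crucial quantity is never computed: you reduce the proposition to the claim that the vertical framing winds by exactly $\pm1$ around each $\Theta_j$, with a sign independent of $j$, and then declare this the ``expected outcome''. That claim is the entire quantitative content of the proposition. Nothing in your argument excludes a winding of $0$ or $\pm 2$ (which would give Euler number $0$ or $\pm 2e$), nor opposite signs at different $\Theta_j$ (which would give any value between $-e$ and $e$ of the right parity); establishing it requires tracking the three vertical framings through the identifications of Figure \ref{Testendi:fig} and through the pleating of $F_j$ and $F_j'$, which is precisely the computation that \cite{GLT} and \cite{MRS} package and that the paper's one-line proof invokes. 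If you want to finish the argument in your own terms, the efficient route is to localize further than the pair-of-pants neighbourhood of $\Theta_j$: along each open edge of $\Theta_j$ only two sheets of $S$ meet and $S$ is merely pleated there, so the two vertical framings differ by a rotation of constant angle in the normal plane and can be interpolated without zeros along the edge; the relative Euler class therefore concentrates at the two vertices of $\Theta_j$, where it is determined by the link of $S$ shown in Figure \ref{links:fig} --- at which point you have rederived the vertex formula of \cite{GLT} rather than bypassed it.
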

\begin{proof}
The Euler number may be calculated using a formula of Gromov -- Lawson --  Thurston \cite{GLT} as a sum of contributions of the vertices of $S$. As shown in \cite{MRS}, all the vertices contribute with zero, except two vertices in each $\Theta_j$, that contribute with $\pm\frac 12$ each (with the same sign everywhere). Therefore we get $\pm \frac 12|e|\cdot 2 = \pm e$.
\end{proof}

We could in principle determine the precise sign of the Euler number, but we do not need to do this. We assign once for all to $M^{e,i,g}$ the orientation that gives the bundle the Euler number $e$.

The parameter $i$ is there to ensure that $M^{e,i,g}$ is large enough to be plumbed geometrically simultaneously in $i$ distinct zones. By construction, the surface $S_{e,i,g}$ contains $i$ portions as in Figure \ref{isole:fig}. In each such portion there is a central vertex $v$ as in the figure. The vertex $v$ is adjacent to four distinct embedded pentagons in $S_{e,i,g}$, and then to 16 distinct embedded 120-cells in $M^{e,i,g}$. These 16 distinct 120-cells form altogether a big right-angled polytope $Z$ that is the four-dimensional analogue of the polyhedron $P$ shown in Figure \ref{8dodecaedri_colorato:fig}. We get $Z$ if we pick all the sixteen 120-cells adjacent to a fixed vertex in the tessellation of $\matH^4$ into right-angled 120-cells. We call each of the disjoint $Z_1,\ldots, Z_i \subset M^{e,i,g}$ obtained in this way an \emph{island}. 

\subsection{The plumbing $M$} \label{sec:plumbing}
We can now prove the main part of Theorem \ref{main:teo}. 

\begin{teo}
Let $M$ be a $\partial$-connected sum of plumbings, each whose graph satisfies the inequality
$$g_i \geq 2\big(|e_i| + v_i +1)$$
at every vertex, where $v_i$ is it valence. Then $M$ admits a hyperbolic structure with right-angled corners. 
\end{teo}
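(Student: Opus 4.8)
The plan is to build $M$ from the blocks $M^{e_i,v_i,g_i}$ of the previous sections, one for each vertex of the plumbing graph, realising the edges by identifications of \emph{islands}. The first point is purely bookkeeping: the hypothesis $g_i \geq 2(|e_i|+v_i+1)$ is exactly the existence condition $g\geq 2(|e|+i+1)$ for the block $M^{e,i,g}$ once the island parameter is taken to be the valence, $i=v_i$. So for every vertex I construct the oriented hyperbolic $4$-manifold with corners $M^{e_i,v_i,g_i}$, which after smoothing is the disc bundle of Euler number $e_i$ over the genus-$g_i$ surface and which carries $v_i$ disjoint islands. I assign one island to each edge-end incident to the vertex; a loop, contributing $2$ to the valence, is allotted two islands, so the count is exactly right.

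Next I perform the plumbings. Each island is isometric to the fixed polytope $Z\subset\matH^4$ formed by the sixteen right-angled $120$-cells around a vertex, and $Z$ carries the full symmetry group of the $4$-cube centred there; in particular it admits an isometry $\tau$ exchanging the two orthogonal totally geodesic $2$-planes through the centre, the one containing the base surface and the orthogonal fibre plane. This swap is the local model of a plumbing. To realise an edge joining vertices $i$ and $j$ with sign $\varepsilon_j$, I take an unused island $Z_a$ in $M^{e_i,v_i,g_i}$ and $Z_b$ in $M^{e_j,v_j,g_j}$ and identify $Z_a$ with $Z_b$ by $\tau$. The key feature of $\tau$ is that it interchanges the facets of $Z_a$ along which block $i$ is attached with the facets of $Z_b$ lying on $\partial M^{e_j,v_j,g_j}$, and vice versa; hence in the quotient every island facet acquires exactly one neighbour, the former boundary facets of each block become glued to the other block, and the two base surfaces meet in a single transverse point. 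Composing $\tau$ with a suitable reflection of $Z$ fixes the sign $\varepsilon_j$, and a loop is handled identically using two islands of the same block. After smoothing corners the resulting object is precisely the plumbing prescribed by the graph, and the surfaces assemble into the immersed pleated surface $S$ in its $2$-skeleton.

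It remains to check that these identifications produce a genuine hyperbolic manifold with right-angled corners. Each is an isometry matching facets, $2$-faces and edges of the ambient $120$-cell tessellation, so the only possible failure is the appearance of a forbidden concave ($270^\circ$) corner along an edge where the glued islands meet the rest of the blocks. This is the main obstacle, and I expect to dispose of it exactly as in the earlier gluings of this paper, by the local dihedral-angle computation of \cite[Lemma 2.5]{MRS}: the relevant angles around each such edge always sum to an admissible convex total, and the fact that each $Z$ is a \emph{complete} symmetric cluster of sixteen cells is what guarantees that $\tau$ respects the corner pattern. One also records that the facets of $M$ stay embedded, since distinct islands occupy disjoint families of $120$-cells by construction; this embeddedness is what will later let us apply Proposition \ref{mirror:prop}.

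Finally, a $\partial$-connected sum of such plumbings is obtained by a simpler device that needs no island. For each connecting disc I pick a top or bottom facet of one block and a top or bottom facet of another and glue them isometrically. By the basic gluing property for manifolds with corners this yields again a hyperbolic manifold with right-angled corners, and since such a facet is an embedded $3$-ball lying in the boundary, topologically the operation glues the two $4$-manifolds along a $3$-ball in each boundary, i.e.\ forms their boundary connected sum. No island is consumed and no transverse double point is created, so the intersection form is unchanged and the required manifold is produced.
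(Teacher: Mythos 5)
Your construction follows the paper's own route: one block $M^{e_j,v_j,g_j}$ per vertex (with island number equal to the valence), plumbings realized by isometric identification of pairs of islands so that the base surfaces cross orthogonally with the prescribed sign, and $\partial$-connected sums realized by gluing embedded dodecahedral facets. The bookkeeping and the topological identification of the result are correct. But the step you yourself call ``the main obstacle'' --- that the island identifications create no forbidden $270$-degree corners --- is not proved; it is asserted, and deferred to the wrong tool. \cite[Lemma 2.5]{MRS} is what handles the thickening identification of Section \ref{sec:M} (sixteen 120-cells glued above a face $\Sigma_j$); it does not cover the island gluing, which is a genuinely different configuration. Concretely: after the plumbing, each 120-cell $C$ of the common island contains one pentagon $P\subset S_j$ and one pentagon $P'\subset S_{j'}$, meeting only at the island's centre $v$; the remaining 120-cells of block $j$ are attached to $C$ along the three dodecahedra $D_1,D_2,D_3$ of $C$ that meet $P$ in an edge and avoid $v$, and those of block $j'$ along the analogous $D'_1,D'_2,D'_3$ for $P'$. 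A concave $270$-degree corner arises exactly when some $D_k$ is adjacent in $C$ to some $D'_l$ (three 120-cells would then surround the pentagon $D_k\cap D'_l$, with no fourth present). The proof therefore hinges on a specific combinatorial fact about the 120-cell --- no $D_k$ is adjacent to any $D'_l$ --- which the paper establishes by an analysis of the vertex figure analogous to \cite[Section 4.3]{MRS}, not by the dihedral-angle computation of Lemma 2.5. Your sentence that ``the relevant angles around each such edge always sum to an admissible convex total'' is precisely this unproved claim; neither the symmetry of the sixteen-cell cluster $Z$ nor the existence of the swap isometry $\tau$ implies it.

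A secondary gap: for the $\partial$-connected sum you glue ``top or bottom'' facets and assert they are embedded $3$-balls. Embeddedness of facets is not automatic in a manifold with corners --- it is exactly the failure that forced the paper to enlarge $P$ to the bigger polyhedron $Q$ in Section \ref{sec:N} --- and the gluing property you invoke requires disjoint embedded facets. The paper instead exhibits explicit embedded dodecahedral facets (the two dodecahedra adjacent to the pentagon opposite a pentagon of $S_j$ inside some 120-cell away from the islands); you would need a comparable justification for whichever facets you pick.
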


\begin{figure}
 \begin{center}
 \labellist
\small\hair 2pt
\endlabellist
  \includegraphics[width = 12 cm]{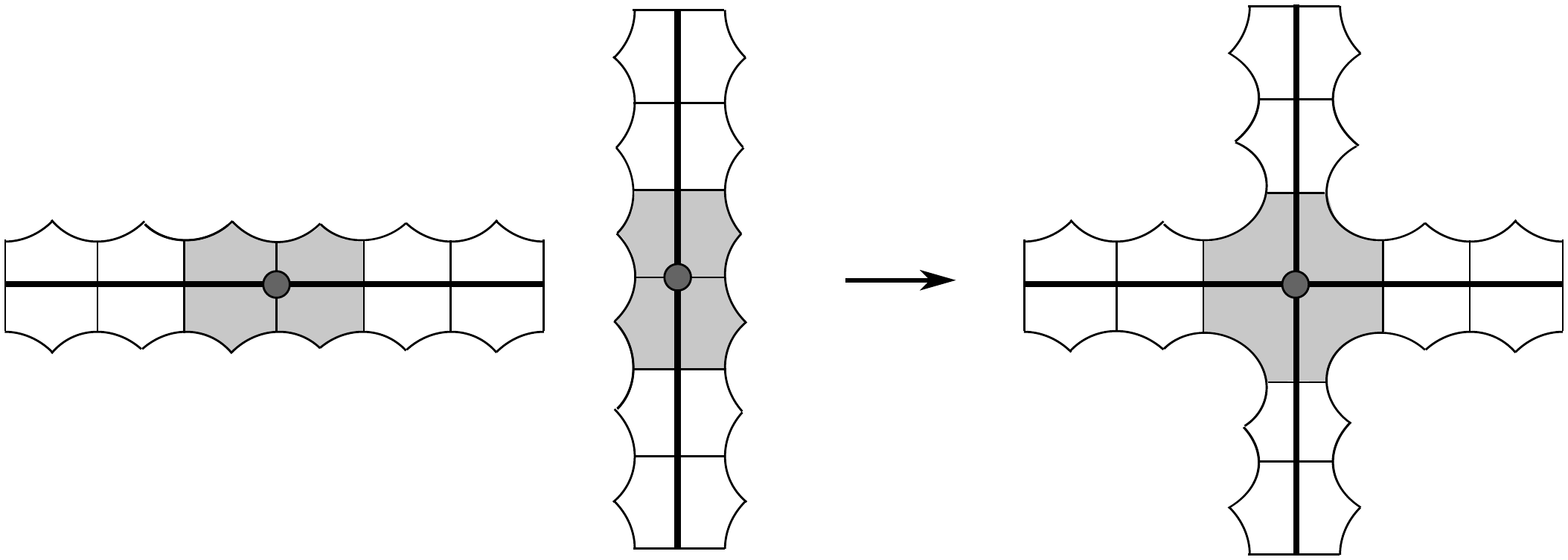}
 \end{center}
 \caption{Two islands (coloured in grey) can be plumbed to produce a new hyperbolic manifold with corners. Here we draw segments and pentagons instead of pentagons and 120-cells.}
 \label{isola:fig}
\end{figure}

\begin{proof}
Consider a plumbing graph satisfying the requirement. For each vertex $j$, pick the hyperbolic manifold with corners $M_j=M^{e_j,v_j,g_j}$, that exists thanks to the assumed inequality. It comes equipped with a genus-$g_j$ surface $S_j \subset M_j$ and $v_j$ disjoint islands. 

Each edge of the plumbing graph is decorated with a sign $\varepsilon = \pm 1$. If it connects the vertices $j$ and $j'$, we choose two islands in $M_j$ and $M_{j'}$, and identify them in a way that makes $S_j$ and $S_{j'}$ intersect orthogonally with the sign $\varepsilon$. See a picture in dimension two in Figure \ref{isola:fig}. We do this for every edge of the plumbing graph.

The result is a new hyperbolic four-manifold with corners. This follows from the fact that the identifications of islands do not produce forbidden concave angles of 270 degrees. To verify this fact, one has to look more closely at the combinatorics of the 120-cell. We perform an analysis analogous to that of \cite[Section 4.3]{MRS}.

Consider the centre $v$ of two 
identified islands, and let $C$ be one of the 120-cells incident to $v$. There is precisely one pentagon $P \subset C$ lying in $S_j$ and one pentagon $P' \subset C$ lying in $S_{j'}$.
These two pentagons intersect each other only in the vertex $v$. Equivalently, they correspond to opposite edges in the tetrahedral vertex figure of $v$ in $C$. There are five distinct dodecahedra in $C$ which intersect the pentagon $P$ in an edge. Of these five dodecahedra, two contain the vertex $v$ and the other three, denoted by $D_k$, $k=1,2,3$ do not. Similarly, there are five dodecahedra which intersect $P'$, and three of these, denoted by $D'_k$, $k=1,2,3$ do not contain the vertex $v$. The crucial property is that no two dodecahedra of the form $D_k$, $D'_l$ are adjacent in $C$. This is sufficient to ensure the absence of a concave angle of 270 degrees, as such phenomenon in our setting can only arise when a 120-cell $C_1$ is glued to $C$ along some dodecahedron $D_k$, and another 120-cell $C_2$ is glued to $C$ along some $D'_l$, with $D_k$ and $D'_l$ adjacent in $C$. We have thus proved that the resulting object is a hyperbolic 4-manifold with right-angled corners.

We have constructed a hyperbolic structure with right-angled corners on every plumbing satisfying the requirements. We may then connect distinct plumbings via geometric $\partial$-connected sums as follows: we select for each connected component an embedded dodecahedral facet, and glue in pairs through arbitrary isometries these facets. Such dodecahedral facets clearly exist, since we left unpaired many facets of each 120-cell in the construction of $M_j$. For instance, consider a pentagon $P$ which lies in $S_j$ but not in an island, choose a 120-cell $C$ of which $P$ is a face, and consider the opposite pentagon $P'$ in $C$. The two dodecaheda adjacent to $P'$ are embedded.

The proof is complete by Proposition \ref{mirror:prop}. 
\end{proof}

\begin{figure}
 \begin{center}
 \labellist
\small\hair 2pt
\pinlabel $(1)$ at 250 300
\pinlabel $(2)$ at 580 300
\pinlabel $(3)$ at 250 20
\pinlabel $(4)$ at 580 20
\endlabellist
  \includegraphics[width = 8 cm]{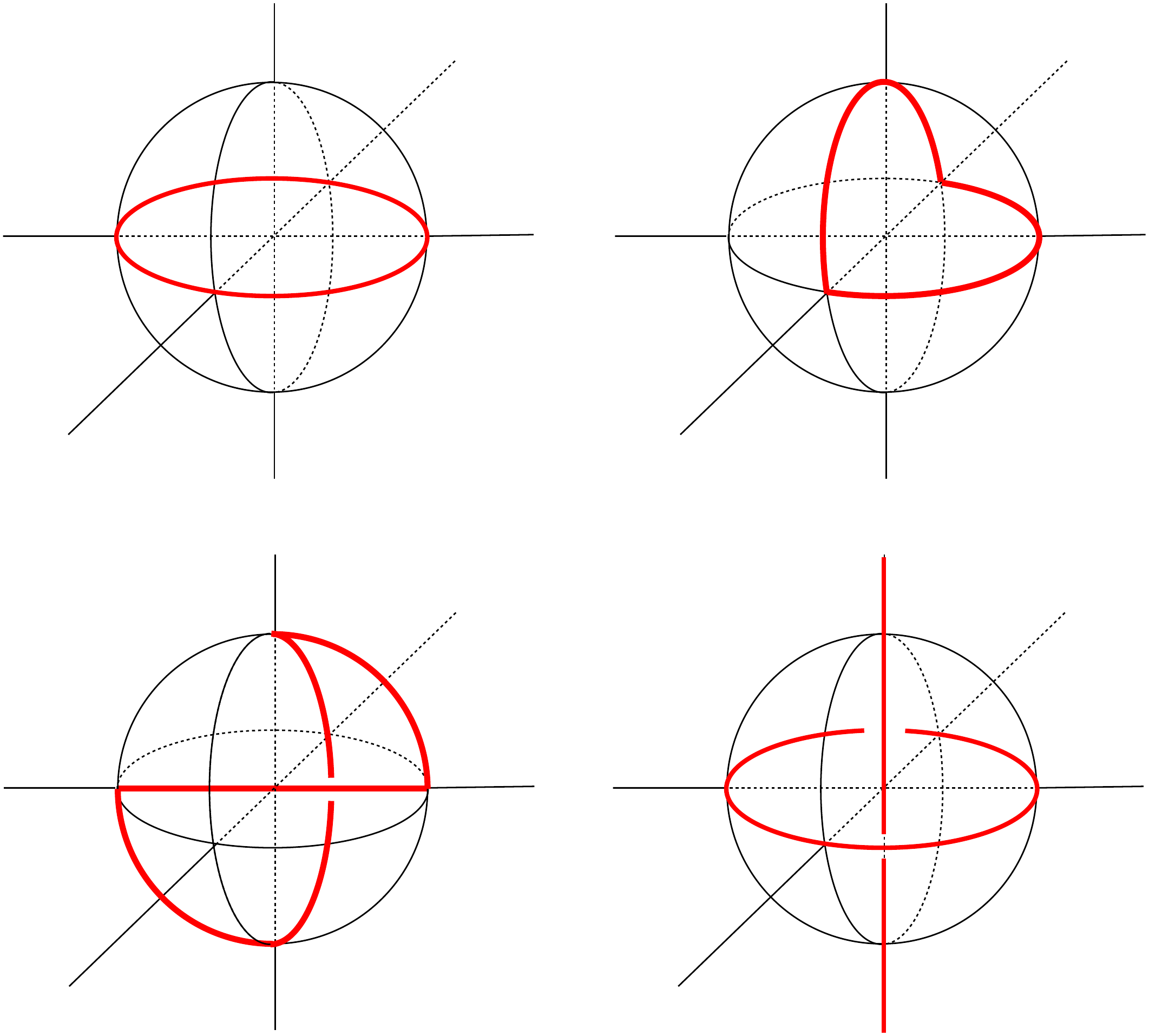}
 \end{center}
 \caption{The immersed base surface in $M$ have vertices with four types of links. The vertices of type (3) are those of $\Theta_j$ and contribute to the Euler numbers. Those of type (4) are the self-intersections.}
 \label{links:fig}
\end{figure}

\begin{rem}
By construction, each manifold with corners $M^{e,i,g}$ is tessellated into right-angled 120-cells and the base surface $S\subset M^{e,i,g}$ is contained in its 2-skeleton. The surface $S$ is tessellated into right-angled pentagons, that are pleated along some edges and vertices. The pleating can be described locally as follows. Every vertex $v\in S$ is adjacent to sixteen 120-cells of the tessellation, whose link at $v$ is the standard triangulation of $S^3$ into sixteen right-angled tetrahedra. The link of $S$ at $v$ is contained in the 1-skeleton of such a triangulation of $S^3$. By construction, the vertices of $S$ have three possible types of links, shown in Figure \ref{links:fig}-(1, 2, 3). These contribute to the self-intersection $S\cdot S$ respectively with 0, 0, and $\pm \frac 12$, via a formula of \cite{GLT}. The vertices of type (3) are precisely those of $\Theta_j$. See also \cite{MRS}.

All these surfaces form altogether an immersed surface in $M$, still contained in the 2-skeleton, onto which $M$ deformation retracts. The vertices where the immersed surface self-intersect have the Hopf link shown in Figure \ref{links:fig}-(4).
\end{rem}

\subsection{Volume estimates} \label{sec:volume}
We conclude here the proof of Theorem \ref{main:teo} by computing the volume of $M$ and estimating that of $W$.

By construction, the skeleton $Y^{e,i,g}$ is tessellated into $y_{e,g}$ dodecahedra, where
$$y_{e,g} \ = \ 2 \cdot 64 |e| + 16g - 24|e| - 16 \ = \ 16g + 104 |e| -16.$$
The addendum $16g - 24|e| - 16$ is simply the number of dodecahedra of the $3$-manifold with corners $X^{e,i,g}$. The other addendum arises because we glue to each of the $|e|$ boundary components of $X^{e,i,g}$ two copies of the $3$-manifold with corners $N$, with each copy consisting of $64$ dodecahedra.

So $M^{e,i,g}$ is tessellated into $m_{e,g}$ 120-cells, where
$$m_{e,g} \ = \ 2 y_{e,g} - 16 |e| \ = \ 32g + 192 |e| - 32.$$ This is true because the thickening of $Y^{e,i,g}$ to $M^{e,i,g}$ can be seen as a two-step procedure: we first thicken each of $X^{e,i,g}$ and the $|e|$ copies of $N$ separately (thus obtaining a total of $2 y_{e,g}$ distinct 120-cells) and then identify sixteen 120-cells from the thickening of each copy of $N$ to sixteen 120-cells from the thickening of $X^{e,i,g}$ as in Figure \ref{Testendi:fig}. 

Let now $V$ and $E$ be the number of vertices and edges of the plumbing graph, respectively. Then $M$ is tessellated into $m$ distinct 120-cells, where
$$m \ = \ \sum_j m_{e_j,g_j} - 16 E \ = \ \sum_j \left( 32g_j + 192 |e_j| \right) - 32 V - 16 E.$$
Again, this holds true because each of the $E$ plumbings which we perform as in Figure \ref{isola:fig} identifies two distinct sets of sixteen 120-cells from the various $M^{e,i,g}$'s into a single one.

Since the right-angled 120-cell has volume $34\pi^2/3$, we have
$$\Vol(M)\ =\ \frac{34\pi^2}{3} m\ \leq\ \frac{34\pi^2}3 \sum_j 128 g_j\ \approx\ 14317.50 \sum_j g_j.$$
The inequality follows from $|e_j| \leq g_j/2$, which is true since $g_j  \geq  2(|e_j| + v_j + 1)$ by hypothesis. We have proved the first inequality of Theorem \ref{main:teo} with $C_1= 14318.$ 

To estimate the volume of $W$, we notice that the same argument of \cite[Lemma 7]{M} applies in this setting. We briefly recall it.
%

It is not difficult to show that the number of isometry classes of the facets of $M$ is bounded by a constant that does not depend on the plumbing graph. In particular, each facet of $M$ has at most $f$ faces, for some universal $f$. (A willing reader may explicitly compute or estimate $f$.) In other words, each vertex of the adjacency graph of the facets of $M$ has valence at most $f$. Since every finite graph without loops and with valence $\leq k$ can be vertex-coloured with at most $k+1$ colours, we can colour the facets of $M$ with at most $f+1$ colours. 

By the proof of Proposition \ref{mirror:prop}, we thus can choose $W$ so that
$$\Vol(W) \leq 2^{f+1} \cdot \Vol(M),$$
and the proof of Theorem \ref{main:teo} is complete with $C_2=2^{f+1}$.


\begin{thebibliography}{99}


\bibitem{AC} \textsc{S.~Anan'in, P.~Chiovetto}, \emph{A couple of real hyperbolic disc bundles over surfaces}, {\tt arXiv:1609.02206}, to appear in Groups, Geometry, and Dynamics

\bibitem{CEG} \textsc{R.~D.~Canary, D.~B.~A.~Epstein, P.~L.~Green}, \emph{Notes on notes of Thurston}, from ``Analytical and geometric aspects of hyperbolic space (Coventry/Durham, 1984)'', London Math. Soc. Lecture Note Ser. \textbf{111}, Cambridge Univ. Press, Cambridge (1987) 3--92.

\bibitem{GS}\textsc{R.~E.~Gompf, A.~I.~Stipsicz}, ``$4$-manifolds and Kirby calculus'', Graduate Studies in Mathematics, 20, American Mathematical Society, Providence, RI, 1999.

\bibitem{GLT} \textsc{M.~Gromov, H.~B.~Lawson, W.~P.~Thurston}, \emph{Hyperbolic $4$-manifolds and conformally flat 3-manifolds}, Publ.~Math., Inst.~Hautes \'Etud.~Sci.~\textbf{68}  (1988), 27--45.

\bibitem{K} \textsc{M.~Kapovich}, \emph{Flat conformal structures on $3$-manifolds}, J. Differential Geom. \textbf{38} (1993), 191--215.

\bibitem{K2} \bysame, \emph{Kleinian groups in higher dimensions}, Geometry and dynamics of groups and spaces, 487--564, Progr. Math., \textbf{265}, Birkhuser, Basel, 2008.

\bibitem{KS} \textsc{S.~P.~Kerckhoff, P.~A.~Storm}, \emph{Local rigidity of hyperbolic manifolds with geodesic boundary}, J.~Topol. \textbf{5} (2012), 757--784.

\bibitem{KMT} \textsc{A.~Kolpakov, B.~Martelli, S.~Tschantz}, \emph{Some hyperbolic three-manifolds that bound geometrically}, Proc.~Amer.~Math.~Soc.~\textbf{143} (2015), 4103--4111.

\bibitem{Ku} \textsc{N.~Kuiper}, \emph{Hyperbolic manifolds and tessellations}, Publ.~Math., Inst.~Hautes \'Etud.~Sci.~\textbf{68}  (1988), 47--76.

\bibitem{LR} \textsc{D.~D.~Long, A.~W.~Reid}, \emph{Constructing hyperbolic manifolds which bound geometrically}, Math. Research Lett. \textbf{8} (2001), 443–456.


\bibitem{L} \textsc{F.~Luo}, \emph{M\"obius structures on Seifert manifolds. I}, J.~Differential~Geom.~\textbf{42} (1995), 634--664.

\bibitem{M} \textsc{B.~Martelli}, \emph{Hyperbolic three-manifolds that embed geodesically}, {\tt arXiv:1510.06325}.

\bibitem{Ma} \bysame, \emph{Hyperbolic four-manifolds}, ``Handbook of Group Actions, Volume III'', Advanced Lectures in Mathematics series \textbf{40} (2018), 37--58.

\bibitem{MRS} \textsc{B.~Martelli, S.~Riolo, L.~Slavich}, \emph{Compact hyperbolic manifolds without spin structures}, {\tt arXiv:1904.12720}, to appear in Geom. Topol.
 
\bibitem{O} \textsc{J.~P.~Otal}, ``Thurston's Hyperbolization of Haken Manifolds'', Surveys in differential geometry, Vol III (Cambridge, MA, 1996), 77-194, Int Press, Boston, MA, 1998
 
\end{thebibliography}
\end{document}